\newcommand{\ib}{\mathbf{i}}
\newcommand{\qb}{\mathbf{q}}
\newcommand{\vb}{\mathbf{v}}
\newcommand{\bX}{\bm{X}}
\newcommand{\bbR}{\mathbb{R}}
\newcommand{\bbE}{\mathbb{E}}
\newcommand{\bbP}{\mathbb{P}}
\newcommand{\cC}{\mathcal{C}}
\newcommand{\cD}{\mathcal{D}}
\newcommand{\cQ}{\mathcal{Q}}
\newcommand{\cS}{\mathcal{S}}
\newcommand{\cR}{{\mathcal{R}}}
\newcommand{\Var}{\mathrm{Var}}
\begin{document}
\title{PageRank Nibble on the sparse directed stochastic block model\thanks{Supported in part by the NSF RTG award (DMS-2134107) and in part by the NSF-CAREER award (DMS-2141621).}}
%
%
\author{Sayan Banerjee  \and
Prabhanka Deka \and
Mariana Olvera-Cravioto \orcidID{0000-0003-3335-759}}
\authorrunning{Banerjee, Deka and Olvera-Cravioto}
%
\institute{University of North Carolina at Chapel Hill, Chapel Hill NC 27514, USA 
\email{\{sayan,deka,molvera\}@email.unc.edu}}
\maketitle              
\begin{abstract}
We present new results on community recovery based on the PageRank Nibble algorithm on a sparse directed stochastic block model (dSBM). Our results are based on a characterization of the local weak limit of the dSBM and the limiting PageRank distribution. This characterization allows us to estimate the probability of misclassification for any given connection kernel and any given number of seeds (vertices whose community label is known). The fact that PageRank is a local algorithm that can be efficiently computed in both a distributed and asynchronous fashion, makes it an appealing method for identifying members of a given community in very large networks where the identity of some vertices is known. 

\keywords{PageRank Nibble  \and directed stochastic block model \and local weak convergence \and community detection.}
\end{abstract}

\section{Introduction}

Many real-world networks exhibit community structure, where members of the same community are more likely to connect to each other than to members of different communities.  Stochastic block models are frequently used to model random graphs with a community structure, and there are many problems where the goal is to identify the members of a given community, often based only on the graph structure, i.e,, on the vertices and the existing edges among them. A two community symmetric SBM is described by two parameters $\alpha$ and $\beta$, which determine the edge probabilities, with $\alpha$ corresponding to the probability that two members from the same community connect to each other, and $\beta$ to the probability that two members from different communities connect to each other.  In \cite{ref_mns_dense}, the authors work on the semi-sparse regime $\alpha = a \log n /n$ and $\beta = b \log n /n$, where $n$ is the number of vertices in the graph,  and show that the exact recovery of communities is efficiently possible if $| \sqrt{a} - \sqrt{b}| > 2$ and impossible otherwise. When recovery is possible, the authors use spectral methods to get an initial guess of the partition and fine tune it to retrieve the communities. Similar work has been done in the sparse regime, where $\alpha = a/n$ and $\beta = b/n$. In \cite{ref_mns_sparse}, the authors show that recovery is impossible when $(a-b)^2 < 2 (a+b)$. In \cite{ref_massoulie_sparse}, it was proved that recovery is efficiently possible when $(a-b)^2 > 2 (a+b)$ through the use of the spectral properties of a modified adjacency matrix $B$ that counts the number of self avoiding paths of a given length $l$ between two vertices in the graph. Further, the authors of \cite{ref_partialrecovery} show that it is possible to recover a fraction $(1-\gamma)$ of the vertices of community 1 if $a$ and $b$ are sufficiently large and satisfy $(a-b)^2 > K_1 \log(\gamma^{-1}) (a+b)$ for some constant $K_1$ . The clustering methods in \cite{ref_mns_dense,ref_massoulie_sparse,ref_partialrecovery} all rely on finding eigenvectors of the adjacency matrix (or a modified adjacency matrix), which is computationally expensive for large networks. 


Although the literature on community detection is vast, and there are in fact many methods that work remarkably well, many of those methods become computationally costly for very large networks. In some important cases like the web graph and social media networks, the networks of interest are so large and constantly changing that it becomes difficult to implement some of these methods. Moreover, in many cases, one has more information about the network than just its structure, e.g., vertex attributes that tell us the community to which certain vertices belong to. The question is then whether one can leverage knowledge of such vertices to help identify other members of their community using a computationally efficient method that does not require information about the entire network.  One such problem was studied in \cite{ref_censor}, where the authors consider community detection in a dense (average degree of vertices scale linearly with the size of the network) SBM in which information about the presence or absence of each edge was hidden at random. Here, we will analyze a setting where the labels of some prominent members of the community of interest are known. 

The PageRank Nibble algorithm was introduced in \cite{ref_prnibble} as a modification of the Nibble algorithm described in \cite{ref_localclustering} that uses personalized PageRank. This algorithm provides a cheap method for identifying the members of one community when a number of individuals in that community have been identified.  PageRank based clustering methods were also proposed in \cite{ref_heatsbm} for the two-commmunity SBM, as a special case of a more general method of combining random walk probabilities using a ``discriminant" function. 

The intuition behind PageRank Nibble is that random walks that start with the individuals that are known to belong to the community we seek will tend to visit more often members of that same community. PageRank Nibble works by choosing the personalization parameter of the known individuals, which we refer to as the ``seeds", to be larger than for all other vertices in the network, and then choosing a damping factor $c$ sufficiently far from either 0 or 1. This choice of the personalization values makes the PageRanks of  close neighbors of the seeds to be larger, compared to those of individuals outside the community. Once the ranks produced by PageRank Nibble have been computed, a simple threshold rule can be used to identify the likely members of the community of interest. PageRank based methods can generally be executed quickly due to the availability of fast, distributed algorithms \cite{ref_distributedpr}.

PageRank Nibble on the undirected SBM was studied in \cite{ref_meanfield} under regimes where personalized PageRank (PPR) concentrates around its mean field approximation. The idea proposed there was to use the mean field approximation to identify vertices belonging to the same community as the seeds. In particular, the authors of \cite{ref_meanfield} show that concentration occurs provided the average degrees grow as $a(n) \log n$ for some $a(n) \to \infty$ as $n \to \infty$, and is impossible for the sparse regime where average degrees remain constant as the network size grows.  Our present work focuses on the directed stochastic block model (dSBM) in the sparse regime, and our results are based on the existence of a local weak limit and, therefore, of a limiting PageRank distribution.  Once we have this characterization, we can compute the probability that an individual will be correctly or incorrectly classified, and choose the threshold that minimizes the misclassification probability.

\section{Main Results}

Let $\mathcal{G}_n = G(V_n, E_n)$ be a dSBM on the vertex set $V_n = \{1, \ldots, n \}$ with two communities. To start, each vertex $v \in V_n$ is assigned a latent label $C_v \in \{1, 2\}$ identifying its community. We assume that these labels are unknown to us. Denote by $\cC_1$ and $\cC_2$ the subsets of vertices in communities 1 and 2 respectively. Then, each possible directed edge is sampled independently according to:  
\begin{align*}
    p^{(n)}_{vw} := \bbP((v,w) \in E_n | C_v, C_w ) = 
    \begin{cases} 
        \frac{a}{n} \wedge 1 & \text{if } C_v = C_w\\
        \frac{b}{n} \wedge 1 & \text{if } C_v \neq C_w.
    \end{cases}
\end{align*}
The edge probabilities can be written as $p_{vw}^{(n)} = (n^{-1}\kappa_{C_v, C_w}) \wedge 1$, where $$\kappa = \begin{bmatrix} a & b \\ b & a \end{bmatrix}$$ is called the connection probability kernel for the dSBM. 

For $i=1, 2$, we define $$\pi_i^{(n)} = \frac{1}{n} \sum_{v=1}^{n} 1 (C_v = i) $$ to be the proportion of vertices belonging to each community. We focus specifically on the case where $\pi_1^{(n)} = \pi_2^{(n)} = 1/2$, but the techniques used here can be applied to more general dSBMs.

To describe the setting for our results, start by fixing a constant $0 < s < 1$, and  assume there exists a subset $\cS \subseteq \cC_1$, with $|\cS| = n \pi_1^{(n)} s$, for which the community labels are known. In other words, we assume that we know the identities of a fixed, positive proportion of the vertices in community 1. We refer to the vertices in $\cS$ as the \textit{seeds}. In a real-world social network one can think of the seeds as famous individuals whose community label or affiliation is known or easy to infer.  Given the seed set $\cS$, the goal is to identify the vertices $v \in \mathcal{C}_1 \setminus \cS$, i.e., to recover the remaining members of community 1. 

In order to describe the PageRank Nibble algorithm, we start first with the definition of personalized PageRank. On a directed graph $G = (V,E)$, the PageRank of vertex $v \in V$ is given by:
\begin{align}
r_v = c \sum_{w \in V : (w,v) \in E} \frac{1}{D_w^+} \, r_w  + (1-c) q_v, 
\label{preq}
\end{align}
where $D_w^+$ is the out-degree of vertex $w \in V$, $q_v$ is the personalization value of vertex $v$, and $c \in (0,1)$ is a damping factor. 

PageRank is one of most popular measures of network centrality, due to both its computational efficiency (it can be computed in a distributed and asynchronous way), and its ability to identify {\it relevant} vertices. When $\mathbf{q} = (q_v: v \in V)$ is a probability vector, the PageRank vector $\mathbf{r} = (r_v: v\in V)$ is known to correspond to the stationary distribution of a random walk that, at each time step, chooses, with probability $c$, to follow an outbound edge uniformly chosen at random, or with probability $1-c$, chooses its next destination according to $\mathbf{q}$  (if the current vertex has no outbound edges, the random walk always chooses its next destination according to $\qb$). PageRank is known to rank highly vertices that either have a large in-degree, or that have close inbound neighbors whose PageRanks are very large \cite{ref_olv22corrPR}, hence capturing both popularity and credibility. Since on large networks the PageRank scores will tend to be very small, it is often convenient to work with the scale-free (graph-normalized) PageRank vector $\mathbf{R} = |V| \mathbf{r}$ instead. 

For the two community dSBM $G_n = (V_n, E_n)$ described above, let $Q_v = nq_v$ and define $$\mu_n( B) = \frac{1}{n} \sum_{v=1}^{n} 1((C_v, Q_v) \in B)$$ for any measurable set $B$. We assume that there exists a limiting measure $\mu$ with $\pi_i := \mu(\{i\} \times \bbR_+) > 0$ for $i = 1,2$ such that 
 \begin{align}\mu_n \Rightarrow \mu \label{muconv} \end{align} in probability. Here and in the sequel, $\Rightarrow$ denotes weak convergence. Further, for any measurable A, let 
\begin{align} 
    \sigma_i^{(n)} (A) = \frac{1}{n \pi_i^{(n)}} \sum_{v \in V_n} 1(C_v = i, Q_v \in A), \qquad i = 1,2, \label{Qlimit}
\end{align} 
denote the empirical distribution of $Q_v$ conditionally on $C_v = i$ for $i = 1,2$. Due to assumption \eqref{muconv}, we get the existence of limiting distributions $\sigma_i$, given by 
$$\sigma_i(A) = \frac{\mu(\{i\} \times A)}{\pi_i}, \qquad i = 1,2,$$ such that $\sigma_i^{(n)} \Rightarrow \sigma_i$ in probability as $n \rightarrow \infty$. 

As mentioned in the introduction, our analysis is based on the existence of a local weak limit for the dSBM, and the fact that if we let $I$ be uniformly chosen in $V_n$, independently of $G(V_n, E_n)$, and let $R_I$ denote the scale-free PageRank of vertex $I$, then $R_I$ converges weakly to a random variable $\mathcal{R}$ as $n \to \infty$. In order to characterize the distribution of $\mathcal{R}$, first define $R^{(1)}$ and $R^{(2)}$ to be random variables satisfying
$$\mathbb{P}\left( R^{(i)} \in \cdot \right) = \mathbb{P}\left( \left. R_I \in \cdot \right| C_I = i \right), \qquad i = 1,2.$$
Our first result establishes the weak convergence of $R^{(i)}$ for $i = 1,2$ and characterizes the limiting distributions as the solutions to a system of distributional fixed-point equations.

\begin{theorem} \label{PRequations}
Let $G_n = (V_n, E_n)$ be a sequence of dSBM as described above such that \eqref{muconv} holds. Then, there exist random variables $\mathcal{R}^{(1)}$ and $\mathcal{R}^{(2)}$ such that for any $x \in \mathbb{R}$ that is a point of continuity of the limit,
$$R^{(i)} \Rightarrow \mathcal{R}^{(i)} \qquad \text{and} \qquad \frac{2}{n} \sum_{v\in V_n} 1( R_v \leq x, \, C_v = i) \xrightarrow{P} \bbP\left( \mathcal{R}^{(i)} \leq x \right)  ,$$
as $n \to \infty$, $i = 1,2$. Moreover, the random variables $\mathcal{R}^{(1)}$ and $\mathcal{R}^{(2)}$ satisfy:
\begin{align}
\mathcal{R}^{(1)} \stackrel{d}{=} c \sum_{j=1}^{\mathcal{N}^{(11)}} \frac{\mathcal{R}^{(1)}_j }{\mathcal{D}_j^{(1)}} + c \sum_{j=1}^{\mathcal{N}^{(12)}} \frac{\mathcal{R}^{(2)}_j }{\mathcal{D}_j^{(2)}} + (1-c) \mathcal{Q}^{(1)} \label{r_eq1} \\
\mathcal{R}^{(2)} \stackrel{d}{=} c \sum_{j=1}^{\mathcal{N}^{(21)}} \frac{\mathcal{R}^{(1)}_j }{\mathcal{D}_j^{(1)}} + c \sum_{j=1}^{\mathcal{N}^{(22)}} \frac{\mathcal{R}^{(2)}_j }{\mathcal{D}_j^{(2)}} + (1-c) \mathcal{Q}^{(2)}  \label{r_eq2}
\end{align}
where $\cQ^{(1)}$ and $\cQ^{(2)}$ are random variables distributed according to $\sigma_1$ and $\sigma_2$ respectively, $\mathcal{N}^{(kl)}$ are Poisson random variables with means $\pi_l \kappa_{lk}$, $(\cD_{j}^{(i)} - 1 : j \geq 1)$, $i = 1,2,$ are i.i.d.~sequences of Poisson random variables with mean $\pi_1\kappa_{i1} + \pi_2 \kappa_{i2}$, and $(\mathcal{R}_j^{(i)}: j \geq 1)$, $i=1,2$, are i.i.d.~copies of $\mathcal{R}^{(i)}$, with all random variables independent of each other. 
\end{theorem}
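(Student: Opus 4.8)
The plan is to realize both the scale-free PageRank of $\cG_n$ and its candidate limit as local functionals of rooted neighborhoods, and then to combine local weak convergence of the \emph{inbound} exploration of $\cG_n$ with the fact that PageRank is uniformly well approximated by its finite-depth truncation; this follows the general program for PageRank on (inhomogeneous) random digraphs, the new ingredient being that the exploration must also carry the community marks and the personalization values. I would first introduce, for $i=1,2$, the two-type branching tree $\cT^{(i)}$ rooted at a vertex of mark $i$: every vertex of mark $k$ carries an independent value with law $\sigma_k$; every non-root vertex of mark $k$ additionally carries an out-degree $\cD^{(k)}=1+\mathrm{Poisson}(\pi_1\kappa_{k1}+\pi_2\kappa_{k2})$, the ``$+1$'' accounting for the edge it sends toward its parent; and, independently, a mark-$k$ vertex has $\cN^{(kl)}\sim\mathrm{Poisson}(\pi_l\kappa_{lk})$ children of mark $l$ for $l=1,2$; all these quantities are mutually independent.

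The first substantive step is local weak convergence. Picking $I$ uniformly in $V_n$ conditioned on $C_I=i$ is the same as picking $I$ uniformly among the $\sim n/2$ vertices of $\cC_i$. Exploring the inbound neighborhood of $I$ in breadth-first order, an already-revealed community-$k$ vertex $w$ has a $\mathrm{Binomial}(n\pi_l^{(n)},(\kappa_{lk}/n)\wedge 1)$ number of not-yet-seen in-neighbors of community $l$, which converges to $\mathrm{Poisson}(\pi_l\kappa_{lk})$, and, independently, an analogous count of fresh out-neighbors, so that $D_w^+=1+\mathrm{Poisson}(\pi_1\kappa_{k1}+\pi_2\kappa_{k2})$ asymptotically; since the chance of closing a cycle within any fixed radius is $O(1/n)$, the revealed region is a finite tree with probability tending to $1$, and \eqref{Qlimit} supplies the $Q$-marks. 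This identifies the local weak limit of $(\cG_n,I)$ given $C_I=i$ as $\cT^{(i)}$, and the same computation run from two independent uniform roots shows the empirical neighborhood distribution over $\cC_i$ converges \emph{in probability}. I expect this to be the main technical obstacle: the in-/out-degree dependence along the exploration has to be tracked consistently, and the ``in probability'' strengthening --- needed for the second display --- requires controlling the rare vertices whose inbound neighborhoods are atypically large.

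Next I would bring in PageRank through truncation. Writing \eqref{preq} as $\mathbf{r}=c\,\mathbf{r}P+(1-c)\mathbf{q}$ with $P$ the sub-stochastic out-transition matrix gives $\mathbf{r}=(1-c)\sum_{k\ge0}c^k\mathbf{q}P^k$, so $R_v^{\le K}:=n(1-c)\sum_{k=0}^K c^k(\mathbf{q}P^k)_v$ depends only on the radius-$K$ inbound neighborhood of $v$, and $0\le R_I-R_I^{\le K}$ with $\bbE[R_I-R_I^{\le K}]=(1-c)\sum_{k>K}c^k\|\mathbf{q}P^k\|_1\le c^{K+1}\,\tfrac1n\sum_v Q_v$; under a mild first-moment assumption on the personalization (uniform integrability of $Q_v$ under $\mu_n$, which keeps $\tfrac1n\sum_v Q_v$ and $\bbE[R_I]$ bounded and $\sigma_i$ integrable) this bound tends to $0$ as $K\to\infty$, uniformly in $n$, and the same estimate holds for the depth-$K$ truncation $\mathcal{R}^{(i),K}$ of PageRank on $\cT^{(i)}$. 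Since $R_I^{\le K}$ is a bounded-depth local functional, convergence of rooted neighborhoods gives $(R_I^{\le K}\mid C_I=i)\Rightarrow\mathcal{R}^{(i),K}$; letting $K\to\infty$ with the uniform truncation bound yields $R^{(i)}\Rightarrow\mathcal{R}^{(i)}:=\lim_K\mathcal{R}^{(i),K}$, an a.s.\ increasing, a.s.\ finite limit because $\bbE[\mathcal{R}^{(i),K}]$ stays bounded in $K$ (the effective mean matrix is subcritical, see below). Finally, decomposing $\cT^{(i)}$ at the root gives
\[
\mathcal{R}^{(i),K}\stackrel{d}{=}c\sum_{j=1}^{\cN^{(i1)}}\frac{\mathcal{R}^{(1),K-1}_j}{\cD^{(1)}_j}+c\sum_{j=1}^{\cN^{(i2)}}\frac{\mathcal{R}^{(2),K-1}_j}{\cD^{(2)}_j}+(1-c)\cQ^{(i)},
\]
and sending $K\to\infty$ by monotone convergence produces exactly \eqref{r_eq1}--\eqref{r_eq2}.

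The second display follows from a two-point argument: its expectation equals $(n\pi_i^{(n)})^{-1}\sum_{v\in\cC_i}\bbP(R_v\le x)$, which converges to $\bbP(\mathcal{R}^{(i)}\le x)$ at continuity points by the first part applied vertex-wise, while its variance is $o(1)$ because the radius-$K$ inbound neighborhoods of two independent uniform vertices of $\cC_i$ are disjoint with probability $1-O(1/n)$, so the truncated indicators are asymptotically pairwise independent, plus the uniform truncation error; Chebyshev then gives convergence in probability. If one also wants uniqueness of the solution of \eqref{r_eq1}--\eqref{r_eq2} (so that $\mathcal{R}^{(1)},\mathcal{R}^{(2)}$ are canonically characterized), I would couple two integrable solutions through common $(\cN^{(kl)},\cD^{(l)}_j,\cQ^{(l)})$, take expectations of $|\mathcal{R}^{(i)}-\widetilde{\mathcal{R}}^{(i)}|$, and use $\bbE[1/\cD^{(l)}]=(1-e^{-(a+b)/2})/((a+b)/2)$ to find that the governing nonnegative linear map has both row sums equal to $c(1-e^{-(a+b)/2})<1$ and is therefore a strict $L^1$-contraction --- which also gives the subcriticality invoked above.
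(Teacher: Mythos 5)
Your proposal is correct and follows essentially the same route as the paper's proof sketch: local weak convergence of the inbound exploration to a marked two-type Galton--Watson tree, convergence of PageRank as a local functional via the finite-depth truncation $\mathbf{r}=(1-c)\sum_k c^k \mathbf{q}P^k$, and a root decomposition yielding the recursive distributional equations. The only difference is that you carry out by hand (the Poissonization/coupling of the exploration, the uniform truncation bound, and the two-root second-moment argument) what the paper delegates to citations, and you correctly flag the uniform-integrability condition on the personalization values that the truncation step implicitly requires.
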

\begin{remark}
    Note that the $(\mathcal{D}_j^{(i)})$ are size-biased Poisson random variables that represent the out-degrees of the inbound neighbors of the explored vertex $I$.    
\end{remark}

The above result holds in more generality for a degree-corrected dSBM with $k$-communities, but for the purposes of this paper, we restrict ourselves to the $k=2$ case. We will only outline a sketch of the proof, and focus our attention instead on the following theorem about the classification of the vertices.

Equations \eqref{r_eq1} and \eqref{r_eq2} are the key behind our classification method. Observe that in the PageRank equations \eqref{preq}, the parameters within our control are the damping factor $c$ and the personalization vector $\mathbf{Q} = (Q_v: v \in V_n)$. If we choose $\mathbf{Q}$ that results in $\cR^{(1)} \geq_\text{s.t.}  \cR^{(2)}$, we can identify vertices in community 1 as the ones having higher PageRank scores. With that in mind, we set $Q_v = 1(v \in \cS)$, choose an appropriate cutoff point $x_0$ (which might depend on c, s and $\kappa$), and classify as a member of community 1 any vertex $v \in V_n$ such that its scale-free PageRank, $R_v$, satisfies $R_v > x_0$.  The algorithm requires that we choose $c$ sufficiently bounded away from both zero and one, since from the random walk interpretation of PageRank, it is clear that we want to give the random surfer time to explore the local neighborhood, while at the same time ensuring that it returns sufficiently often to the seed set. In practice, a popular choice for the damping factor is $c=0.85$. In the context of the dSBM, we have that when $a >> b$, the random surfer ends up spending more time exploring the vertices in community 1, and the probability that it escapes to community 2 before jumping back to the seeds is much smaller. As a result, the stationary distribution ends up putting more mass on the community 1 vertices, and the proportion of misclassified vertices diminishes when $a+b$ is large and $b/a$ is close to zero. We formalize this in the theorem below. Note that Theorem~\ref{PRequations} gives that the miscalssification probabilities satisfy:
$$\quad \mathbb{P}\left( \left. R_v \leq x_0  \right| v \in \mathcal{C}_1 \right) \approx \mathbb{P}\left( \mathcal{R}^{(1)} \leq x_0 \right) \quad \text{and}$$
$$\mathbb{P}\left( \left. R_v > x_0  \right| v \in \mathcal{C}_2 \right) \approx \mathbb{P}\left( \mathcal{R}^{(2)} > x_0 \right) . $$

Our local classification algorithm with input parameters $c$ and $x_0$ is then described as follows:
\begin{enumerate}
    \item Set $Q_v = 1$ if $v \in \cS$, and zero otherwise. 
    \item Fix the damping factor $c \in (0,1)$ and compute the personalized scale-free PageRank vector $\mathbf{R}$.     
    \item For a threshold $x_0$, the estimated members of $\mathcal{C}_1$ are the vertices in the set $\hat{\cC}_1(x_0,c) = \{ v \in V_n: R_v > x_0 \}$.
\end{enumerate}

The theorem below can be used to quantify the damping factor $c$ and the classification threshold $x_0$, and the corollary that follows shows that the proportion of misclassified vertices becomes small with high probability as $n \rightarrow \infty$. 

\begin{theorem} \label{maintheorem}
    Let $G_n = (V_n, E_n)$ be a 2-community dSBM with $$\kappa = \begin{bmatrix}
        a & b\\
        b & a
    \end{bmatrix}$$ and $\pi_1 = \pi_2 = 1/2$. Assume $a,b$ satisfy $ 8b/(a+b) < 1/2$ and ${\rm e}^{-(a+b)/2} < b/4a$. Let $Q_v = 1(v \in \cS)$ for $v \in V_n,$ and take any $c \in (1/2, 1 - 8b/(a+b)]$. Then, for $x_0 = 5s/8$, we have 
    \begin{align}
    \bbP\left(\cR^{(1)} < \frac{5s}{8} \right) &\leq \frac{256 c^2}{(a+b)(1-c^2)} + \frac{64(1-c)(1-s)}{(1+c)s} \label{c1misclassification},\\
        \bbP\left(\cR^{(2)} > \frac{5s}{8} \right) &\leq \frac{256 c^2}{(a+b)(1-c^2)}\left(1 + \frac{(1-c)(1-s)}{2(1+c)s} \right). \label{c2misclassification}
    \end{align}
    
\end{theorem}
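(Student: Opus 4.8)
The plan is to work directly with the distributional fixed-point equations \eqref{r_eq1}--\eqref{r_eq2} from Theorem~\ref{PRequations}, specialized to the personalization $Q_v = 1(v\in\cS)$. With $\cS\subseteq\cC_1$ and $|\cS| = n\pi_1^{(n)}s$, the limiting law $\sigma_1$ is Bernoulli$(s)$ and $\sigma_2 = \delta_0$, so $\cQ^{(1)}\sim\mathrm{Bernoulli}(s)$ and $\cQ^{(2)}=0$ almost surely. The two bounds in \eqref{c1misclassification}--\eqref{c2misclassification} are then one-sided deviation estimates for $\cR^{(1)}$ and $\cR^{(2)}$, so I would compute the relevant first (and second) moments of $\cR^{(1)},\cR^{(2)}$ from the fixed-point system and apply Markov's and Chebyshev's inequalities. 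Concretely: taking expectations in \eqref{r_eq1}--\eqref{r_eq2} and using Wald's identity together with the fact that $\cD_j^{(i)}$ is a size-biased Poisson (so $\bbE[1/\cD_j^{(i)}] = (1-\mathrm{e}^{-\lambda_i})/\lambda_i$ with $\lambda_i = \pi_1\kappa_{i1}+\pi_2\kappa_{i2} = (a+b)/2$) yields a $2\times 2$ linear system for $m_i := \bbE[\cR^{(i)}]$. Solving it (using symmetry $\pi_1=\pi_2=1/2$, $\kappa_{11}=\kappa_{22}=a$, $\kappa_{12}=\kappa_{21}=b$) gives $m_1 + m_2 = (1-c)(s) / (1-c) = \ldots$ — more precisely $m_1+m_2$ and $m_1-m_2$ decouple, with $m_1 - m_2$ governed by the spectral gap $c(a-b)/(a+b)$ of the mean kernel, so that $m_1$ is bounded below by something close to $s$ and $m_2$ is bounded above by something of order $(1-c)s$ plus a $c\,b/(a+b)$ correction. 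One then checks that under the hypotheses $8b/(a+b)<1/2$, $c\le 1-8b/(a+b)$, the value $m_1$ sits comfortably above $5s/8$ and $m_2$ comfortably below it, with explicit gaps.

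The lower bound \eqref{c1misclassification} on $\bbP(\cR^{(1)} < 5s/8)$ I would get by a \emph{second-moment / one-sided Chebyshev} argument: writing $5s/8 = m_1 - \delta$ with $\delta$ the explicit gap just computed, Chebyshev gives $\bbP(\cR^{(1)} < 5s/8) \le \Var(\cR^{(1)})/\delta^2$. So I need an upper bound on $\Var(\cR^{(i)})$. This again comes from the fixed-point equations: squaring \eqref{r_eq1}--\eqref{r_eq2}, conditioning on the Poisson counts, and using independence of the $\cR_j^{(i)}$, one obtains a linear system for the second moments $\bbE[(\cR^{(i)})^2]$ whose coefficient matrix involves $c^2\,\bbE[1/\cD^2]$ and $c^2(\bbE[1/\cD])^2$ times Poisson means; the relevant contraction factor is $c^2$ times the appropriate Poisson-mean quantities, and the hypothesis $c>1/2$ together with $c\le 1-8b/(a+b)<1$ keeps everything bounded, giving $\Var(\cR^{(i)}) = O\!\big(c^2/((a+b)(1-c^2))\big)$. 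Matching constants carefully against the target $256c^2/((a+b)(1-c^2))$ is where the bookkeeping lives. For \eqref{c2misclassification} I would instead use Markov: $\bbP(\cR^{(2)} > 5s/8) \le \bbE[\cR^{(2)}]\,/\,(5s/8)$, or, to get the stated product form, bound $\bbE[\cR^{(2)}]$ by $m_2$ and then multiply by $8/(5s)$; the two summands $1 + (1-c)(1-s)/(2(1+c)s)$ suggest that $m_2$ itself naturally splits into a "variance-like" piece of order $c^2/((a+b)(1-c^2))$ and a cross-term picking up the factor $(1-c)(1-s)/((1+c)s)$, so I would keep that decomposition throughout rather than collapsing it.

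The main obstacle, I expect, is not the structure but the \emph{constant-chasing}: extracting clean, fully explicit bounds (the magic numbers $5/8$, $256$, $64$) from the linear systems for the first and second moments, while only invoking the two stated hypotheses $8b/(a+b)<1/2$ and $\mathrm{e}^{-(a+b)/2}<b/4a$ and the range $c\in(1/2,\,1-8b/(a+b)]$. In particular the condition $\mathrm{e}^{-(a+b)/2}<b/4a$ is presumably used to control the size-biasing correction $\bbE[1/\cD] = (1-\mathrm{e}^{-(a+b)/2})/((a+b)/2)$ — i.e.\ to say $\mathrm{e}^{-(a+b)/2}$ is negligible relative to $b/a$ — so I would isolate exactly where that enters (likely in bounding the off-diagonal coupling $m_1-m_2$ from below and in simplifying $\bbE[1/\cD]\approx 2/(a+b)$). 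A secondary subtlety is justifying that the moments $\bbE[\cR^{(i)}]$, $\bbE[(\cR^{(i)})^2]$ are finite and that the fixed-point system determines them uniquely; since $c<1$ and the Poisson means are finite this follows from a standard contraction argument on the space of distributions with finite second moment, but it should be stated. Once finiteness and the two moment systems are in hand, \eqref{c1misclassification} is one-sided Chebyshev and \eqref{c2misclassification} is Markov, and the proof reduces to verifying the inequalities numerically under the hypotheses.
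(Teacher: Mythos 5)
Your strategy for \eqref{c1misclassification} is essentially the paper's: take expectations in \eqref{r_eq1}--\eqref{r_eq2} with $\cQ^{(1)}\sim\mathrm{Bernoulli}(s)$, $\cQ^{(2)}\equiv 0$, solve the resulting $2\times 2$ linear system for $r_i=\bbE[\cR^{(i)}]$ (the paper solves it directly rather than diagonalizing into $r_1\pm r_2$, but that is cosmetic), derive a parallel linear system for the variances (the paper uses the law of total variance rather than squaring, again equivalent), and then apply Chebyshev. The constants are exactly as you suspect: the hypotheses force $r_1\ge 3s/4$ and $r_2\le s/2$, one takes deviation $z=s/8$ on each side of $x_0=5s/8$, and $64/s^2$ times the variance bounds $v_1\le \frac{4c^2s^2}{(a+b)(1-c^2)}+\frac{1-c}{1+c}s(1-s)$ and $v_2\le\frac{4c^2s^2}{(a+b)(1-c^2)}\bigl(1+\frac{(1-c)(1-s)}{2s(1+c)}\bigr)$ produces the $256$ and $64$. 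Your guess about where $\mathrm{e}^{-(a+b)/2}<b/4a$ enters is also right: it is used in the lower bound for $r_1$ to absorb the term $\frac{a}{a+b}\mathrm{e}^{-(a+b)/2}$ into $\frac{cb}{a+b}$.

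The genuine gap is in your treatment of \eqref{c2misclassification}. Markov's inequality cannot deliver that bound: with the available estimate $r_2\le s/2$, Markov at level $5s/8$ gives only $\bbP(\cR^{(2)}>5s/8)\le (s/2)/(5s/8)=4/5$, a constant that does not decay in $a+b$ and has the wrong functional form. Your fallback idea --- that $m_2=\bbE[\cR^{(2)}]$ ``naturally splits into a variance-like piece of order $c^2/((a+b)(1-c^2))$ plus a cross-term'' --- is not correct either: the exact formula $r_2=\frac{(c\lambda b/(a+b))\,s(1-c)}{\Delta}$ admits no such decomposition, and the product form $\frac{256c^2}{(a+b)(1-c^2)}\bigl(1+\frac{(1-c)(1-s)}{2(1+c)s}\bigr)$ is simply $64 v_2/s^2$, i.e.\ it comes from the \emph{variance} of $\cR^{(2)}$, not its mean. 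The correct step, which is symmetric to the first bound, is Chebyshev on the upper tail: since $r_2\le s/2$,
\begin{equation*}
\bbP\left(\cR^{(2)}>\tfrac{5s}{8}\right)\le \bbP\left(\cR^{(2)}-r_2>\tfrac{s}{8}\right)\le \frac{64\,v_2}{s^2},
\end{equation*}
and then the bound on $v_2$ (whose two summands trace back to the $r_1^2$-contribution and the $(1-c)^2s(1-s)$ personalization-variance contribution in the linear system for $(v_1,v_2)$) gives \eqref{c2misclassification}. So you need the second-moment computation for $\cR^{(2)}$ as well, not just for $\cR^{(1)}$; once that is in place both inequalities follow by the same Chebyshev argument.
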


Naturally, the misclassification errors get smaller as $s$ increases, i.e., as more members of community 1 are known. Also, we get better bounds for the misclassification errors when $a+b$ is large (strong connectivity within a community) and $b/(a+b)$ is small (equivalently, $a/(a+b)$ close to one), i.e., when the network is strongly assortative. 

Note that the assumptions in Theorem \ref{maintheorem} do not involve $s$ (proportion of seeds). As the proof indicates, our classification errors involve Chebychev bounds which crucially depend on: (i) the mean PageRank scores of the two communities being sufficiently different, and (ii) the ratio of the variance of the PageRank scores of vertices in each community to the square of the mean community PageRank being small. By Lemma \ref{firstmomentbound} below, the ratio of the mean community PageRank scores is independent of $s$ and hence their separation required by (i) is ensured by conditions involving $a,b$ but not $s$. Moreover, as seen in Lemma \ref{secondmomentbound}, the scaled fluctuations in (ii) depend more significantly on the `sparsity' of the underlying network, quantified by $a+b$ (expected total degree of a vertex), than $s$. Thus, the dependence on $s$ arises mainly through the choice of the threshold $x_0$ in our classification algorithm (see Corollary \ref{cor}).

As a direct corollary to Theorem \ref{maintheorem}, we have
\begin{corollary}\label{cor}
Let $x_0 = 5s/8$, $c \in (1/2, 1 - 8b/(a+b)]$, $$\delta_1 = \frac{256 c^2}{(a+b)(1-c^2)} + \frac{64(1-c)(1-s)}{(1+c)s} $$ and $$\delta_2 = \frac{256 c^2}{(a+b)(1-c^2)}\left(1 + \frac{(1-c)(1-s)}{2(1+c)s} \right).$$ Then, under the hypothesis of Theorem 2, for $\delta = \delta_1 + \delta_2$ and any $\epsilon > 0$, we have 
$$\lim_{n \rightarrow \infty } \bbP \left( | \cC_1 \triangle \hat{\cC}_1(x_0,c)| > \frac{(\delta + \epsilon) n}{2}\right) = 0.$$
\end{corollary}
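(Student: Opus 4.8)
The plan is to rewrite $|\cC_1 \triangle \hat{\cC}_1(x_0,c)|$ as a sum of two misclassification counts, to identify the limit in probability of each count using Theorem~\ref{PRequations}, to bound those limits by $\delta_1$ and $\delta_2$ via the estimates behind Theorem~\ref{maintheorem}, and to finish with a one-line convergence-in-probability argument. Since there are only two communities and $\hat{\cC}_1(x_0,c) = \{v \in V_n : R_v > x_0\}$, a vertex $v$ lies in $\cC_1 \triangle \hat{\cC}_1(x_0,c)$ exactly when either $C_v = 1$ and $R_v \le x_0$ (a missed community-$1$ vertex) or $C_v = 2$ and $R_v > x_0$ (a wrongly included community-$2$ vertex), so
$$\frac{2}{n}\,\big|\cC_1 \triangle \hat{\cC}_1(x_0,c)\big| = \frac{2}{n}\sum_{v \in V_n} 1(R_v \le x_0,\, C_v = 1) \;+\; \frac{2}{n}\sum_{v \in V_n} 1(R_v > x_0,\, C_v = 2).$$

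For the first summand I would apply the second assertion of Theorem~\ref{PRequations}, giving $\tfrac{2}{n}\sum_v 1(R_v \le y, C_v = 1) \xrightarrow{P} \bbP(\cR^{(1)} \le y)$ at every continuity point $y$ of the law of $\cR^{(1)}$; for the second summand I would write $1(R_v > z, C_v = 2) = 1(C_v = 2) - 1(R_v \le z, C_v = 2)$ and use $\tfrac1n\sum_v 1(C_v = 2) = \pi_2^{(n)} = 1/2$ to get $\tfrac2n\sum_v 1(R_v > z, C_v = 2) \xrightarrow{P} \bbP(\cR^{(2)} > z)$ at every continuity point $z$ of the law of $\cR^{(2)}$. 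Because $\cQ^{(1)}$ and $\cQ^{(2)}$ are $\{0,1\}$-valued, $\cR^{(1)}$ and $\cR^{(2)}$ typically have atoms (e.g.\ at $0$ and at $1-c$), so $x_0 = 5s/8$ need not be a continuity point; I would therefore sandwich, picking continuity points $y_k \downarrow x_0$ and $z_k \uparrow x_0$ and using $1(R_v \le x_0) \le 1(R_v \le y_k)$ and $1(R_v > x_0) \le 1(R_v > z_k)$. This yields, for each fixed $k$,
$$\frac{2}{n}\,\big|\cC_1 \triangle \hat{\cC}_1(x_0,c)\big| \;\le\; \frac{2}{n}\sum_v 1(R_v \le y_k, C_v = 1) + \frac{2}{n}\sum_v 1(R_v > z_k, C_v = 2) \;\xrightarrow{P}\; \bbP(\cR^{(1)} \le y_k) + \bbP(\cR^{(2)} > z_k),$$
and letting $k \to \infty$ the limiting constant decreases to $\bbP(\cR^{(1)} \le x_0) + \bbP(\cR^{(2)} \ge x_0)$.

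Next I would invoke Theorem~\ref{maintheorem}, whose hypotheses on $a,b,c$ and whose choice $Q_v = 1(v\in\cS)$ are exactly those in force here: its proof bounds, via Chebyshev's inequality applied to $\cR^{(1)}-\bbE\cR^{(1)}$ and $\cR^{(2)}-\bbE\cR^{(2)}$, not only $\bbP(\cR^{(1)}<x_0)$ and $\bbP(\cR^{(2)}>x_0)$ but equally $\bbP(\cR^{(1)}\le x_0)\le\delta_1$ and $\bbP(\cR^{(2)}\ge x_0)\le\delta_2$, since $\{X\le t\}$ and $\{X<t\}$ (resp.\ $\{X\ge t\}$ and $\{X>t\}$) receive the same Chebyshev bound once $\bbE X>t$ (resp.\ $\bbE X<t$). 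Hence, given $\epsilon>0$, I can fix $k$ with $\bbP(\cR^{(1)}\le y_k)+\bbP(\cR^{(2)}>z_k)<\delta+\epsilon/2$, and then
$$\bbP\!\left(\big|\cC_1 \triangle \hat{\cC}_1(x_0,c)\big| > \frac{(\delta+\epsilon)n}{2}\right) \le \bbP\!\left(\frac{2}{n}\sum_v 1(R_v \le y_k, C_v = 1) + \frac{2}{n}\sum_v 1(R_v > z_k, C_v = 2) > \delta + \epsilon\right) \longrightarrow 0$$
by the convergence in probability just displayed, which is the assertion.

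The step I expect to be the main obstacle — indeed the only delicate point — is justifying that $x_0 = 5s/8$ may be treated as a continuity point, i.e.\ that replacing the strict inequalities of Theorem~\ref{maintheorem} by the non-strict ones needed after sandwiching costs nothing. As sketched above this reduces to the observation that $\delta_1,\delta_2$ arise from variance-over-mean-gap Chebyshev bounds insensitive to whether the endpoint is included; the one genuine input to be checked (already implicit in Theorem~\ref{maintheorem} via Lemma~\ref{firstmomentbound}) is the strict mean separation $\bbE\cR^{(2)} < 5s/8 < \bbE\cR^{(1)}$ under the stated assumptions on $a,b,c,s$. Everything else — the set-theoretic decomposition of the symmetric difference, the identity $\pi_2^{(n)}=1/2$, the monotone sandwiching, and the final union-bound step — is routine bookkeeping once Theorems~\ref{PRequations} and~\ref{maintheorem} are available.
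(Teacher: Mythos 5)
Your proposal is correct and follows essentially the same route as the paper: decompose $|\cC_1 \triangle \hat{\cC}_1|$ into the two misclassification counts, identify their limits in probability via Theorem~\ref{PRequations}, and bound the limiting constant by $\delta_1+\delta_2$ using the Chebyshev estimates behind Theorem~\ref{maintheorem}. In fact your version is more careful than the paper's on two points it silently elides --- the sandwiching by continuity points $y_k\downarrow x_0$, $z_k\uparrow x_0$ (Theorem~\ref{PRequations} only guarantees convergence at continuity points, and $x_0=5s/8$ could be an atom), and the observation that the Chebyshev bounds apply equally to the non-strict events $\{\cR^{(1)}\le x_0\}$ and $\{\cR^{(2)}\ge x_0\}$, so that the limiting constant is $\le\delta$ (the paper writes ``$=\delta$,'' which should be ``$\le\delta$'') --- and both refinements are exactly the right fixes.
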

\begin{proof}
    For notational convenience, we drop the dependence of $\hat{\mathcal{C}}_1$ on $x_0$ and $c$. Observe that $| \cC_1 \triangle \hat{\cC}_1| = |\cC_1 \backslash \hat{\cC}_1| + | \hat{\cC}_1 \cap \cC_2|$, and we have $\cC_1 \backslash \hat{\cC}_1 = \{v \in \cC_1 : R_v < 5s/8 \}$ and $\hat{\cC}_1 \cap \cC_2 = \{ v \in \cC_2 : R_v > 5s/8 \}$. So we get that for $x_0 = 5s/8$,
    $$\bbP \left(| \cC_1 \triangle \hat{\cC}_1| > \frac{(\delta+\epsilon) n}{2} \right) = \bbP \left( \frac{2}{n}\sum_{v \in \cC_1} 1(R_v < x_0) + \frac{2}{n} \sum_{v \in \cC_2} 1(R_v > x_0) > \delta +\epsilon \right).$$ Then the result follows since 
    \begin{align*}
       & \frac{2}{n}\sum_{v \in \cC_1} 1(R_v < x_0) +  \frac{2}{n} \sum_{v \in \cC_2} 1(R_v > x_0) \\
       &\xrightarrow{P} \bbP(\cR^{(1)} < x_0)+ \bbP(\cR^{(2)} > x_0) = \delta
    \end{align*}
    as $n \rightarrow \infty$.
\end{proof}

\begin{remark}\label{opco}
Our proof of Theorem \ref{maintheorem} uses Chebyshev’s inequalities based on mean and variance bounds for the limiting (scale-free) personalized PageRank distribution obtained from the distributional fixed-point equations in Theorem \ref{PRequations}. The choice of $x_0$ above is rather ad hoc and mainly for simplicity of the associated misclassification error bounds. One can check that the choice of $x_0$ which minimizes the sum of the Chebyshev error bounds is given by $x_0^* = (r_1 v_2^{1/3} + r_2v_1^{1/3})/(v_1^{1/3} + v_2^{1/3})$, where $r_1,r_2$ are the expected limiting PageRank values obtained in Lemma \ref{firstmomentbound} and $v_1,v_2$ are the corresponding variances obtained in Lemma \ref{secondmomentbound}. Further, $x_0 = 5s/8$ is independent of the kernel parameters $a$ and $b$, which are often unknown in practice. Moreover, although the range of $c$ depends on $a,b$, the results above hold for any $c$ in the given range. Hence, in practice, when $a,b$ are not known, then any $c > 1/2$ which is not too close to one should work provided the network is not too sparse ($b/(a+b)$ is sufficiently small). 
\end{remark}



\section{Proofs}

As mentioned earlier, Theorem~\ref{PRequations} holds in considerably more generality than the one stated here, so we will only provide a sketch of the proof that suffices for the simpler dSBM considered here. The proof of Theorem~\ref{maintheorem} is given later in the section.

\begin{proof}{Theorem~\ref{PRequations} (Sketch).}
The proof consists of three main steps. 
\begin{enumerate}
\item \textbf{Establish the local weak convergence of the dSBM:} For the 2-community dSBM considered here, one can modify the coupling in \cite{ref_dcsbm} (which works for an undirected SBM) to the exploration of the in-component of a uniformly chosen vertex. The coupled graph is a 2-type Galton-Watson process, with the two types corresponding to the two communities in the dSBM, and all edges directed from offspring to parent. The number of offspring of type $j$ that a node of type $i$ has is a Poisson random variable with mean $m_{ij}^- = \pi_j \kappa_{ji}$ for $j=1,2$. For each node $\ib $ in the coupled tree, denote by $C_{\ib}$ its type, and assign it a mark $\bX_{\ib} = ( \mathcal{D}_{\ib}, Q_{\ib})$, where $(\mathcal{D}_{\ib} - 1 )| C_{\ib} = j$ is a Poisson random variable with mean $m_j^+ = \pi_1 \kappa_{j1} + \pi_2 \kappa_{j2} $, and $Q_{\ib} | C_{\ib} = j$ has distribution $\sigma_j$ as defined in \eqref{Qlimit}. The construction of the coupling follows a two step exploration process similar to the one done for inhomogeneous random digraphs in \cite{ref_prinhomogeneous}. First the outbound edges of a vertex are explored, followed by the exploration of its inbound neighbors, assigning marks to a vertex once we finish exploring both its inbound and outbound one-step neighbors. This establishes the local weak convergence in probability of the dSBM to the 2-type Galton-Watson process. 

\item \textbf{Establish the local weak convergence of PageRank:} Once we have the local weak convergence of the dSBM, let $\mathcal{R}^*$ denote the personalized PageRank of the root node of the 2-type Galton-Watson process in the coupling. The local weak convergence in probability of the PageRanks on the dSBM to $\mathcal{R}^*$, i.e., 
$$\frac{1}{n} \sum_{v \in V_n} 1( R_v \leq x) \xrightarrow{P} \bbP (\mathcal{R}^* \leq x)$$
as $n \to \infty$, follows from Theorem~2.1 in \cite{ref_weakconvpr}. Note that the random variables $\mathcal{R}^{(1)}$ and $\mathcal{R}^{(2)}$ correspond to the conditional laws of $\mathcal{R}^*$ given that the root has type 1 or type 2, respectively. And since the two communities are assumed to have the same size, the probability that the root has type 1 is $1/2$, hence,
$$\frac{1}{n} \sum_{v \in V_n} 1( R_v \leq x, \, C_v = i) \xrightarrow{P} \bbP(\mathcal{R}^{(i)} \leq x ) \frac{1}{2},$$
as $n \to \infty$. The weak convergence result follows from the bounded convergence theorem.

\item \textbf{Derive the distributional fixed point equations:} If the nodes in the first generation of the 2-type Galton-Watson process are labeled $1 \leq j \leq \mathcal{N}$, where $\mathcal{N}$ denotes the number of offspring of the root node, then 
$$\mathcal{R}^* = c \sum_{j=1}^{\mathcal{N}} \frac{\mathcal{R}_j}{\mathcal{D}_j} + (1-c) \mathcal{Q},$$
where $\mathcal{Q}$ denotes the personalization value of the root, $(\mathcal{D}_j: j \geq 1)$ correspond to the out-degrees of the offspring, and the $(\mathcal{R}_j: j \geq 1)$ correspond to their PageRanks. Conditioning on the type of the root, as well as on the types of its offspring, gives the two distributional fixed-point equations \eqref{r_eq1} and \eqref{r_eq2}. In particular, conditionally on the root having type $i$,  $\mathcal{N}^{(ik)}$ corresponds to the number of offspring of type $k$, $\mathcal{Q}^{(i)}$ has distribution $\sigma_i$, and $\mathcal{D}_1^{(k)}$ and $\mathcal{R}_1^{(k)}$ are independent random variables having the distribution of $\mathcal{D}_1$ and $\mathcal{R}_1$ conditionally on node $1$ having type $k$.
\end{enumerate}
\end{proof}

We prove Theorem $\ref{maintheorem}$ through the second moment method. First we prove the following lemmas establishing bounds on the mean and variance of $\cR^{(1)}$ and $\cR^{(2)}$.

\begin{lemma} \label{firstmomentbound}
Let $r_i = \bbE \left[ \cR^{(i)} \right]$, $\lambda = 1 - {\rm e}^{-(a+b)/2}$ and $$\Delta = \left(1 - \frac{c \lambda a}{a+b} \right)^2 - \left( \frac{c \lambda b}{a+b}\right)^2 .$$ Then, we have 
\begin{align}
    r_1 &= \frac{\left( 1 - \frac{c \lambda  a}{a+b}\right)s(1-c)}{\Delta} \label{r1mean}\\
    r_2 &= \frac{ \left( \frac{c \lambda b}{a+b}\right)s(1-c)}{\Delta} \label{r2mean}.
\end{align}
Further, if $1 - \lambda = {\rm e}^{-(a+b)/2} \leq b/4a$ and $c >1/2$, we have the bounds
\begin{align}
    r_1 &\geq s \left(1 - \frac{2b}{(1-c) (a+b)}\right) \label{r1bound},\\
    r_2 &\leq \frac{s}{2} \label{r2bound}.
\end{align}
\end{lemma}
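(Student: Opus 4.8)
## Proof Plan for Lemma \ref{firstmomentbound}

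The plan is to take expectations on both sides of the distributional fixed-point equations \eqref{r_eq1}--\eqref{r_eq2}, turning them into a $2\times 2$ linear system for $(r_1,r_2)$, solve that system explicitly, and then obtain \eqref{r1bound}--\eqref{r2bound} by elementary estimates. With the choice $Q_v = 1(v\in\cS)$ and $\pi_1^{(n)}=\pi_2^{(n)}=1/2$, the conditional law of $Q_v$ is $\sigma_1 = s\,\delta_1 + (1-s)\,\delta_0$ on community $1$ and $\sigma_2 = \delta_0$ on community $2$, so $\bbE[\cQ^{(1)}]=s$ and $\bbE[\cQ^{(2)}]=0$. Since $\cD_j^{(i)}\stackrel{d}{=}1+\mathrm{Poisson}((a+b)/2)$ for both $i=1,2$, the identity $\bbE[(1+\mathrm{Poisson}(\mu))^{-1}] = (1-e^{-\mu})/\mu$ gives $\bbE[1/\cD_j^{(i)}] = 2\lambda/(a+b)$ with $\lambda = 1-e^{-(a+b)/2}$. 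Applying Wald's identity to each of the two random sums (using that the $\cR_j^{(\cdot)}$, the $\cD_j^{(\cdot)}$ and the $\cN^{(\cdot)}$ are mutually independent and $\bbE[\cN^{(kl)}]=\pi_l\kappa_{lk}$), and abbreviating $p=c\lambda a/(a+b)$, $q=c\lambda b/(a+b)$, the equations become $r_1 = p\,r_1+q\,r_2+(1-c)s$ and $r_2 = q\,r_1+p\,r_2$.

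Before solving this I would verify that $r_1,r_2<\infty$, so that the interchange of expectation with the (a priori infinite) random sums is legitimate. This follows by monotone approximation: $\cR^{(i)}$ is the a.s.\ limit of its finite‑generation truncations $\cR^{(i)}_{(m)}$ on the limiting two‑type tree (whose a.s.\ existence is part of Theorem~\ref{PRequations}); the vector of means $\bbE[\cR^{(i)}_{(m)}]$ satisfies the recursion $\mathbf{m}_{(m+1)} = M\mathbf{m}_{(m)} + ((1-c)s,0)^{\!\top}$ with $M$ the nonnegative matrix having $M_{11}=M_{22}=p$, $M_{12}=M_{21}=q$, and since $\rho(M)=p+q=c\lambda<1$ these means stay bounded; by monotone convergence $r_i=\lim_m\bbE[\cR^{(i)}_{(m)}]<\infty$ and $(r_1,r_2)$ solves the limiting linear system. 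Solving it, with $\Delta=(1-p)^2-q^2=(1-p-q)(1-p+q)>0$, yields $r_1=(1-p)(1-c)s/\Delta$ and $r_2=q(1-c)s/\Delta$, which are exactly \eqref{r1mean}--\eqref{r2mean}.

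For the bounds it is convenient to set $t=c\lambda\in(0,1)$ and $\theta=a/(a+b)\in(0,1)$, so $p=t\theta$, $q=t(1-\theta)$ and $\Delta=(1-t)\bigl(1+t(1-2\theta)\bigr)$. Then \eqref{r2bound}, i.e.\ $r_2\le s/2$, is equivalent to $2q(1-c)\le\Delta$; using $1-c\le 1-t$ this reduces to $2t(1-\theta)\le 1+t(1-2\theta)$, i.e.\ $t\le 1$, which holds. For \eqref{r1bound} I would first dispose of the trivial case $2b/((1-c)(a+b))\ge 1$, where the right‑hand side of \eqref{r1bound} is $\le 0\le r_1$; in the remaining case $2b<(1-c)(a+b)\le a+b$, so $b<a$ and $\theta>1/2$. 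Writing $r_1 = s\,\frac{1-c}{1-c\lambda}\bigl(1-\frac{q}{1-p+q}\bigr)$, I would bound each of the two factors below by $1-\frac{b}{(1-c)(a+b)}$: the first using $1-c\lambda = 1-c+c\,e^{-(a+b)/2}\le 1-c+cb/(4a)$ together with $a+b<4a$; the second using $1-p+q\ge 1-c\lambda$ (valid since $\theta>1/2$) together with $c\lambda(2-c)\le c(2-c)\le 1$, which gives $\frac{q}{1-p+q}\le \frac{c\lambda b}{(1-c\lambda)(a+b)}\le \frac{b}{(1-c)(a+b)}$. Multiplying the two factors and using $(1-x)^2\ge 1-2x$ gives $r_1\ge \bigl(1-\tfrac{b}{(1-c)(a+b)}\bigr)^2 s\ge \bigl(1-\tfrac{2b}{(1-c)(a+b)}\bigr)s$, which is \eqref{r1bound}.

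I expect the only genuine care‑points to be the finiteness/interchange justification in the second paragraph (which is standard but must be made) and keeping the constants aligned across the two‑factor estimate for \eqref{r1bound}; the derivation of the linear system and the estimate for \eqref{r2bound} are routine computations.
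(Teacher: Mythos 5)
Your proposal is correct and follows essentially the same route as the paper: take expectations in \eqref{r_eq1}--\eqref{r_eq2} using Wald's identity and $\bbE[1/\cD^{(i)}]=2\lambda/(a+b)$, solve the resulting $2\times 2$ system to get \eqref{r1mean}--\eqref{r2mean}, and then derive the bounds by elementary estimates (your two-factor decomposition of $r_1$ is just a reorganization of the paper's denominator manipulation, and your finiteness/interchange remark is a justification the paper leaves implicit).
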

\begin{proof}
Recall the distributional equations satisfied by $\cR^{(1)}$ and $\cR^{(2)}$ from Theorem~\ref{PRequations}. Taking expectation on both sides gives us
\begin{align*}
    \bbE[\cR^{(1)}] &= c \bbE \left[  \sum_{j=1}^{\mathcal{N}^{(11)}}\frac{\cR_j^{(1)}}{\cD_j^{(1)}} + \sum_{j=1}^{\mathcal{N}^{(12)}}\frac{\cR_j^{(2)} }{\cD_j^{(2)}} \right]  + (1-c) \bbE [\cQ^{(1)}],\\
    \bbE[\cR^{(2)}] &= c \bbE \left[ \sum_{j=1}^{\mathcal{N}^{(21)}}\frac{\cR_j^{(1)}}{\cD_j^{(1)}} + \sum_{j=1}^{\mathcal{N}^{(22)}} \frac{\cR_j^{(2)}}{\cD_j^{(2)}} \right]   + (1-c) \bbE[\cQ^{(2)}].
\end{align*}
First, note that with our choice of $\mathcal{Q}$, $\bbE[\cQ^{(1)}] = s$  and $\bbE[\cQ^{(2)}] = 0$. Further $(\cR_j^{(i)}, \cD_j^{(i)})_{j \geq 1}$ (resp. $(\cR_j^{(i)}, \cD_j^{(i)})_{j \geq 1}$) are independent of $\mathcal{N}^{(1i)}$ (resp. $\mathcal{N}^{(2i)}$), and of each other, for $i = 1, 2$. So the above expressions can be simplified to 
\begin{align*}
    r_1 &= c \left( \bbE[ \mathcal{N}^{(11)}] \bbE \left[\frac{1}{\cD^{(1)}} \right] r_1 + \bbE[ \mathcal{N}^{(12)} ] \bbE \left[\frac{1}{\cD^{(2)}} \right] r_2 \right) + (1-c)s,\\
    r_2 &= c \left( \bbE[ \mathcal{N}^{(21)}] \bbE \left[\frac{1}{\cD^{(1)}} \right] r_1 + \bbE[ \mathcal{N}^{(22)}] \bbE \left[\frac{1}{\cD^{(2)}} \right] r_2 \right),
\end{align*}
where $\mathcal{N}^{(ij)}$ and $(\cD^{(i)} - 1)$ are Poisson random variables with means as described in Theorem~\ref{PRequations}. Therefore we can further reduce the equations to
\begin{align*}
    r_1 &= c\left( \frac{a}{2} \cdot \frac{(1 - {\rm e}^{-(a+b)/2})}{(a+b)/2} \cdot r_1 + \frac{b}{2} \cdot \frac{(1 - {\rm e}^{-(a+b)/2})}{(a+b)/2} \cdot r_2 \right) + (1-c)s,\\
    r_2 &= c \left(\frac{b}{2} \cdot \frac{(1 - {\rm e}^{-(a+b)/2})}{(a+b)/2} \cdot r_1 + \frac{a}{2} \cdot \frac{(1 - {\rm e}^{-(a+b)/2})}{(a+b)/2} \cdot r_2 \right),
\end{align*}
or in matrix form, and after substituting $\lambda = (1 - {\rm e}^{-(a+b)/2})$, 
\begin{align}
    \begin{bmatrix}
        1 -  \frac{ca \lambda}{a+b} & -\frac{cb\lambda}{a+b}\\
        -\frac{cb\lambda}{a+b} & 1 - \frac{ca\lambda}{a+b}
    \end{bmatrix}
    \begin{bmatrix}
        r_1 \\ r_2
    \end{bmatrix}
    = \begin{bmatrix}
        (1-c)s \\ 0
    \end{bmatrix}. \label{matrixform}
\end{align}
Solving \eqref{matrixform}, we get 
\begin{align*}
\left[ \begin{matrix} r_1 \\ r_2 \end{matrix} \right] &= \frac{1}{\Delta} \left[ \begin{matrix} \left( 1 - c\lambda a/(a+b) \right) s(1-c) \\ c\lambda b s(1-c)/(a+b) \end{matrix} \right],
\end{align*}
where  $$\Delta = \left(1 - \frac{c \lambda a}{a+b} \right)^2 - \left( \frac{c \lambda b}{a+b}\right)^2 $$ as required. Note that since $c \lambda < 1$, we have $\Delta > 0$, and so the above quantities are well defined. From here, the bound for $r_2$ is a straightforward calculation.
\begin{align*}
    r_2 &= \frac{\frac{c \lambda b}{a+b}  (1-c)s}{\Delta} \leq \frac{\frac{b}{a+b} (1-c)s}{(1 - c \lambda)\left(1 - c \lambda \frac{a-b}{a+b} \right)} \leq \frac{sb}{(a+b)} \frac{1}{\left( 1 - \frac{a-b}{a+b}\right)} = \frac{s}{2}.
\end{align*}
To get the bound for $r_1$, we proceed as follows
\begin{align*}
    r_1 &= \frac{\left( 1 - \frac{c \lambda a}{a+b}\right) (1-c)s}{\Delta}\\
    &\geq \frac{s(1-c)}{1 - \frac{c \lambda a}{a+b}} = \frac{s (1-c)}{\frac{a+b}{a+b} - \frac{c\lambda a}{a+b}} = \frac{s (1-c) }{\frac{b}{a+b} + \frac{a}{a+b}\left( 1 - \lambda + \lambda (1-c) \right)}\\
    &\geq \frac{s (1-c) }{\frac{b}{a+b} + \frac{a}{a+b} \left({\rm e}^{-(a+b)/2} + (1-c) \right)} = \frac{s (1-c) }{(1-c) + \frac{cb}{a+b} + \frac{a}{a+b}{\rm e}^{-(a+b)/2}}\\
    &\geq  \frac{s(1-c)}{1-c + \frac{2bc}{a+b}} \geq s \left( 1 - \frac{2b}{(1-c)(a+b)} \right),
    \end{align*}
where for the last inequality we used fact that ${\rm e}^{-(a+b)/2}a/(a+b) \leq b/4(a+b) \leq cb/(a+b)$ due to our assumptions on $\lambda$ and $c$, and $1 - x^2 \leq 1$ for all $x \in \bbR$. This completes the proof.
\end{proof}
The next lemma provides a result for the variances. 

\begin{lemma}\label{secondmomentbound} 
Define $v_i = \Var(\mathcal{R}^{(i)})$ for $i=1,2$, then, if we let $\mathbf{v} = (v_1, v_2)'$, and $\mathbf{r^2} = (r_1^2, r_2^2)'$, then
$$\mathbf{v}  = \frac{1}{2(1-g_1)(1-g_2)} \left( K \mathbf{r^2} + (1-c)^2 s(1-s) \mathbf{k} \right),$$
where $g_1 = c^2 \bbE[1/(\cD^{(1)})^2] (a-b)/2$, $g_2 = c^2 \bbE[1/(\cD^{(1)})^2] (a+b)/2$, 
$$K  = \left[ \begin{matrix} g_1 + g_2 - 2g_1 g_2 , & g_2  - g_1  \\ g_2  - g_1 , & g_1 + g_2 - 2g_1 g_2\end{matrix} \right], \quad \text{and} \quad \mathbf{k} =  \left[ \begin{matrix} 2- g_1-g_2  \\ g_2- g_1  \end{matrix} \right] .$$
Furthermore,
\begin{align*}
    v_1 &\leq \frac{4c^2s^2}{(a+b)(1-c^2)} + \frac{1-c}{1+c}s(1-s), \\
    v_2 &\leq \frac{4c^2s^2}{(a+b)(1-c^2)}\left(1 + \frac{(1-c)(1-s)}{2s(1+c)}\right).
\end{align*}
\end{lemma}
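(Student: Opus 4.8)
The plan is to take the variance of both sides of the distributional fixed-point equations \eqref{r_eq1}--\eqref{r_eq2}, reduce this to a $2\times2$ linear system for $\mathbf v=(v_1,v_2)'$, solve the system explicitly, and then bound its solution using Lemma~\ref{firstmomentbound} together with a handful of elementary estimates on $g_1,g_2$.

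\emph{Step 1 (the linear system).} The three summands on the right-hand side of \eqref{r_eq1} (and of \eqref{r_eq2}) are mutually independent, so the variance of $\cR^{(i)}$ splits into three contributions. For a Poisson-compound sum $S=\sum_{j=1}^{\cN}Y_j$ with $\cN\sim\mathrm{Poisson}(\theta)$ independent of the i.i.d.\ sequence $(Y_j)$ one has $\Var(S)=\theta\,\bbE[Y_1^2]$; applying this to each random sum, using the independence of $(\cR^{(k)}_j)$ and $(\cD^{(k)}_j)$ and the fact that $\cD^{(1)}\stackrel{d}{=}\cD^{(2)}=1+\mathrm{Poisson}((a+b)/2)$, and using that $Q_v=1(v\in\cS)$ forces $\cQ^{(1)}\sim\mathrm{Bernoulli}(s)$ and $\cQ^{(2)}\equiv0$ (so $\Var(\cQ^{(1)})=s(1-s)$, $\Var(\cQ^{(2)})=0$), one obtains, writing $e=\bbE[1/(\cD^{(1)})^2]$ and $m_i=\bbE[(\cR^{(i)})^2]=v_i+r_i^2$,
\begin{align*}
v_1 &= c^2 e\Bigl(\tfrac a2 m_1+\tfrac b2 m_2\Bigr)+(1-c)^2 s(1-s),\\
v_2 &= c^2 e\Bigl(\tfrac b2 m_1+\tfrac a2 m_2\Bigr).
\end{align*}
Since $c^2 e\,a/2=(g_1+g_2)/2$ and $c^2 e\,b/2=(g_2-g_1)/2$, substituting $m_i=v_i+r_i^2$ and collecting the $v_i$-terms on the left turns this into
\[
\begin{bmatrix} 1-\tfrac{g_1+g_2}{2} & -\tfrac{g_2-g_1}{2}\\ -\tfrac{g_2-g_1}{2} & 1-\tfrac{g_1+g_2}{2}\end{bmatrix}\mathbf v
=\tfrac12\begin{bmatrix} g_1+g_2 & g_2-g_1\\ g_2-g_1 & g_1+g_2\end{bmatrix}\mathbf{r^2}+(1-c)^2 s(1-s)\begin{bmatrix}1\\0\end{bmatrix}.
\]

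\emph{Step 2 (solving it).} The matrix on the left is symmetric with diagonal entries $p=1-(g_1+g_2)/2$ and off-diagonal entries $q=-(g_2-g_1)/2$; since $p-q=1-g_1$ and $p+q=1-g_2$, its determinant equals $(1-g_1)(1-g_2)$. Inverting it and multiplying out the right-hand side---using $g_1+g_2-2g_1g_2=g_1(1-g_2)+g_2(1-g_1)$, $\,g_2-g_1=g_2(1-g_1)-g_1(1-g_2)$ and $2-g_1-g_2=(1-g_1)+(1-g_2)$---gives exactly $\mathbf v=\frac{1}{2(1-g_1)(1-g_2)}\bigl(K\mathbf{r^2}+(1-c)^2 s(1-s)\mathbf k\bigr)$ with $K,\mathbf k$ as stated. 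Invertibility (equivalently $g_2<1$) and the a priori finiteness of $v_1,v_2$ are what make this manipulation legitimate; I would justify them by first running the same computation for the finite-depth iterates of the fixed-point map (each of which trivially has finite second moments, being a finite sum of products of moments of Poisson variables), noting that their second moments obey a linear recursion whose iteration matrix has spectral radius $g_2$, and then passing to the limit once $g_2<1$ is known.

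\emph{Step 3 (the bounds).} Throughout, work in the assortative regime $a\ge b$, so $0\le g_1\le g_2$. Since $1/\cD^2\le1/\cD$, we have $e\le\bbE[1/\cD]=\frac{1-{\rm e}^{-(a+b)/2}}{(a+b)/2}$, hence $g_2=c^2 e\,(a+b)/2\le c^2(1-{\rm e}^{-(a+b)/2})\le c^2<1$; and since $1/(1+N)^2\le2/((1+N)(2+N))$ with $\bbE[1/((1+N)(2+N))]=\frac{1-{\rm e}^{-\mu}}{\mu^2}-\frac{{\rm e}^{-\mu}}{\mu}\le\mu^{-2}$ for $\mu=(a+b)/2$, we get $e\le8/(a+b)^2$ and therefore $g_2\le4c^2/(a+b)$. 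From Lemma~\ref{firstmomentbound}, or by summing the two rows of \eqref{matrixform} (which gives $(1-c\lambda)(r_1+r_2)=(1-c)s$, so $r_1+r_2\le s$) together with the second row of \eqref{matrixform} (which gives $0\le r_2\le r_1$), we have $0\le r_1^2-r_2^2\le r_1^2+r_2^2\le s^2$. Using the identities of Step~2 the solution rewrites as
\begin{align*}
v_1 &=\frac{g_1(r_1^2-r_2^2)}{2(1-g_1)}+\frac{g_2(r_1^2+r_2^2)}{2(1-g_2)}+\frac{(1-c)^2 s(1-s)}{2}\Bigl(\frac1{1-g_1}+\frac1{1-g_2}\Bigr),\\
v_2 &=\frac{g_2(r_1^2+r_2^2)}{2(1-g_2)}-\frac{g_1(r_1^2-r_2^2)}{2(1-g_1)}+\frac{(1-c)^2 s(1-s)(g_2-g_1)}{2(1-g_1)(1-g_2)}.
\end{align*}
For $v_1$: as $x\mapsto x/(1-x)$ is increasing, the first two terms are at most $\frac{g_2(r_1^2+r_2^2)}{1-g_2}\le\frac{g_2 s^2}{1-c^2}\le\frac{4c^2 s^2}{(a+b)(1-c^2)}$, and the last term is at most $\frac{(1-c)^2 s(1-s)}{1-g_2}\le\frac{(1-c)^2 s(1-s)}{1-c^2}=\frac{1-c}{1+c}s(1-s)$. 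For $v_2$: the negative $g_1$-term is nonpositive and may be dropped, leaving a first part at most $\frac{g_2 s^2}{2(1-c^2)}\le\frac{2c^2 s^2}{(a+b)(1-c^2)}\le\frac{4c^2 s^2}{(a+b)(1-c^2)}$, while the last term is at most $\frac{(1-c)^2 s(1-s)g_2}{2(1-g_2)^2}\le\frac{2c^2(1-c)^2 s(1-s)}{(a+b)(1-c^2)^2}=\frac{4c^2 s^2}{(a+b)(1-c^2)}\cdot\frac{(1-c)(1-s)}{2s(1+c)}$. Adding the two parts in each case yields the stated inequalities.

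The step I expect to be the main obstacle is the a priori finiteness of the second moments---needed to make the linear-algebra manipulation in Steps~1--2 rigorous---which I would handle through the fixed-point-iteration argument outlined in Step~2; everything else is bookkeeping, mainly tracking the signs of the $g_1$-terms so they are discarded in the correct direction and matching the two estimates $g_2\le c^2$ and $g_2\le4c^2/(a+b)$ against the target expressions.
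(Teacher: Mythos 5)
Your proposal is correct and follows essentially the same route as the paper: it derives the identical linear system $\mathbf v = c^2 M(\mathbf v + \mathbf{r^2}) + \mathbf h$ (the paper via the law of total variance, you via the equivalent compound-Poisson variance formula), inverts $I-c^2M$ using the same eigenvalues $1-g_1,1-g_2$, and closes with the same estimates $g_2\le\min\{c^2,4c^2/(a+b)\}$ and $r_1+r_2\le s$. The only substantive additions are your justification of $\bbE[1/(\cD^{(1)})^2]\le 8/(a+b)^2$ and of the a priori finiteness of second moments, both of which the paper uses without comment.
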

\begin{proof}
    To calculate the variance of $\cR^{(1)}$ and $\cR^{(2)}$, we will rely on the law of total variances, i.e., for any two random variables $X$ and $Y$, $$\Var(X) =  \Var[\bbE(X|Y)] + \bbE [ \Var(X|Y)].$$ Applying this to equation \eqref{r_eq1}, along with the fact that $r_i < 1$ for $i = 1, 2$, we get the following bound for $\Var(\cR^{(1)})$:
    \begin{align*}
        \Var(\cR^{(1)}) =& \ c^2 \Var \left( r_1 \mathcal{N}^{(11)} \bbE\left[ \frac{1}{\cD^{(1)}} \right] + r_2 \mathcal{N}^{(12)} \bbE\left[ \frac{1}{\cD^{(2)}}\right] \right) \\  &+ c^2 \bbE \left[ \mathcal{N}^{(11)} \Var\left( \frac{\cR^{(1)}}{\cD^{(1)}}\right) + \mathcal{N}^{(12)} \Var\left( \frac{\cR^{(2)}}{\cD^{(2)}}\right) \right] + (1-c)^2 \Var(\cQ^{(1)}) .
    \end{align*}
Now use the observation that $\Var(\mathcal{N}^{(11)}) = \bbE[\mathcal{N}^{(11)}] = a/2$, $\Var(\mathcal{N}^{(12)}) = \bbE[ \mathcal{N}^{(12)}] = b/2$, 
and $\Var(\mathcal{Q}^{(1)}) = s(1-s)$, to obtain that for $v_i = \Var(\mathcal{R}^{(i)})$,
\begin{align*}
v_1 &= c^2 \left( r_1^2 \cdot \frac{a}{2} \cdot \left(\bbE\left[\frac{1}{\cD^{(1)}} \right]\right)^2  + r_2^2 \cdot  \frac{b}{2} \cdot \left(\bbE\left[\frac{1}{\cD^{(2)}} \right]\right)^2  \right) \\
&\hspace{5mm} + c^2 \left( \frac{a}{2} \Var\left( \frac{\cR^{(1)}}{\cD^{(1)}}\right) + \frac{b}{2} \Var\left( \frac{\cR^{(2)}}{\cD^{(2)}}\right)  \right) + (1-c)^2 s(1-s) \\
&= c^2 \left( r_1^2 \cdot \frac{a}{2} \cdot \left(\bbE\left[\frac{1}{\cD^{(1)}} \right]\right)^2  + r_2^2 \cdot  \frac{b}{2} \cdot \left(\bbE\left[\frac{1}{\cD^{(2)}} \right]\right)^2  \right) + (1-c)^2 s(1-s) \\
&\hspace{5mm} + \frac{c^2 a}{2} \left(  \Var\left(  \frac{1}{\cD^{(1)}}  r_1 \right) +  \bbE\left[ \frac{1}{(\cD^{(1)})^2 } \Var(\mathcal{R}^{(1)})  \right]  \right) \\
&\hspace{5mm} + \frac{c^2 b}{2} \left(  \Var\left(  \frac{1}{\cD^{(2)}}  r_2 \right) +  \bbE\left[ \frac{1}{(\cD^{(2)})^2 } \Var(\mathcal{R}^{(2)})  \right]  \right) \\
&= c^2 \left( r_1^2 \cdot \frac{a}{2} \cdot \left(\bbE\left[\frac{1}{\cD^{(1)}} \right]\right)^2  + r_2^2 \cdot  \frac{b}{2} \cdot \left(\bbE\left[\frac{1}{\cD^{(2)}} \right]\right)^2  \right) + (1-c)^2 s(1-s) \\
&\hspace{5mm} + \frac{c^2 a}{2} \left( r_1^2 \Var\left(  \frac{1}{\cD^{(1)}} \right) +  \bbE\left[ \frac{1}{(\cD^{(1)})^2 }   \right] v_1  \right) \\
&\hspace{5mm} + \frac{c^2 b}{2} \left( r_2^2 \Var\left(  \frac{1}{\cD^{(2)}} \right) +  \bbE\left[ \frac{1}{(\cD^{(2)})^2 } \right] v_2  \right) \\
&= (1-c)^2 s(1-s)    + \frac{c^2 a}{2}   \bbE\left[ \frac{1}{(\cD^{(1)})^2 }   \right] (r_1^2 + v_1)   + \frac{c^2 b}{2}  \bbE\left[ \frac{1}{(\cD^{(2)})^2 } \right] (r_2^2 + v_2 ).
\end{align*}
Similarly, using $\mathcal{Q}^{(2)} \equiv 0$ and 
\begin{align*}
v_2 &= c^2 \Var \left( r_1 \mathcal{N}^{(21)} \bbE\left[ \frac{1}{\cD^{(1)}} \right] + r_2 \mathcal{N}^{(22)} \bbE\left[ \frac{1}{\cD^{(2)}}\right] \right) \\  
&\hspace{5mm} + c^2 \bbE \left[ \mathcal{N}^{(21)} \Var\left( \frac{\cR^{(1)}}{\cD^{(1)}}\right) + \mathcal{N}^{(22)} \Var\left( \frac{\cR^{(2)}}{\cD^{(2)}}\right) \right] + (1-c)^2 \Var(\cQ^{(2)}) \\
&= \frac{c^2 b}{2}   \bbE\left[ \frac{1}{(\cD^{(1)})^2 }   \right] (r_1^2 + v_1)   + \frac{c^2 a}{2}  \bbE\left[ \frac{1}{(\cD^{(2)})^2 } \right] (r_2^2 + v_2 ).
\end{align*}
Writing the above in matrix notation we obtain for $\mathbf{v} = (v_1, v_2)'$ and $\mathbf{r^2} = (r_1^2, r_2^2)'$,
\begin{align*}
\mathbf{v} &= c^2 M (\mathbf{v} + \mathbf{r^2}) + \mathbf{h},
\end{align*}
where (note that $\cD^{(1)} \stackrel{d}{=} \cD^{(2)}$), 
$$M =  \frac{\bbE\left[ 1/(\cD^{(1)})^2 \right]}{2} \left[ \begin{matrix} a   & b  \\
b   & a  \end{matrix} \right] \qquad \text{and} \qquad \mathbf{h} = \left[ \begin{matrix} (1-c)^2 s(1-s) \\ 0 \end{matrix} \right].$$
Moreover, use the observation that
$$M = B A B, \quad A = \frac{\bbE[1/(\cD^{(1)})^2]}{2} \left[ \begin{matrix} (a-b) & 0 \\ 0 & (a+b) \end{matrix} \right], \quad B = \frac{1}{\sqrt{2}} \left[ \begin{matrix} -1 & 1 \\ 1 & 1 \end{matrix} \right],$$
so the maximum eigenvalue of $M$ is $\bbE[1/(\cD^{(1)})^2] \bbE[\cD^{(1)}-1]$. Since for a Poisson random variable $N$ with mean $\mu$ we have that
\begin{equation}\label{bf}
\bbE[1/(N+1)^2] E[N] = \sum_{n=1}^\infty \frac{e^{-\mu} \mu^n}{n \cdot n!} = \bbE\left[ \frac{1}{N \vee 1}\right] - e^{-\mu} \leq 1,
\end{equation}
then the matrix $I - c^2 M$ is invertible, and we obtain
\begin{align*}
\mathbf{v} &= (I - c^2 M)^{-1} ( c^2 M \mathbf{r^2} + \mathbf{b}) = \sum_{k=0}^\infty c^{2k} M^k (c^2 M \mathbf{r^2} + \mathbf{b}) \\
&= B \left[ \begin{matrix} \frac{c^2 A_{11}}{1- c^2 A_{11}} & 0 \\ 0 & \frac{c^2 A_{22}}{1- c^2 A_{22}} \end{matrix} \right] B \mathbf{r^2} + B \left[ \begin{matrix} \frac{1}{1-c^2 A_{11}} & 0 \\ 0 & \frac{1}{1-c^2 A_{22}} \end{matrix} \right] B \mathbf{b}.
\end{align*}
Setting $g_i = c^2 A_{ii}$ for $i = 1,2$, and computing the product of matrices gives:
$$\mathbf{v}  = \frac{1}{2(1-g_1)(1-g_2)} \left( K \mathbf{r^2} + (1-c)^2 s(1-s) \mathbf{k} \right),$$
for $K$ and $\mathbf{k}$ defined in the statement of the lemma.

Further, if we let $\Delta_2 = 2(1-g_1)(1-g_2)$ and expand the above equation, we obtain 
\begin{align*}
    \vb = \frac{1}{\Delta_2} \left(\begin{bmatrix}
        (g_1 + g_2 - 2g_1g_2)r_1^2 + (-g_1 + g_2) r_2^2 \\  (-g_1 + g_2) r_1^2 + (g_1 + g_2 - 2g_1g_2)r_2^2
     \end{bmatrix} + (1-c)^2s(1-s)\begin{bmatrix}
         (2 - (g_1+g_2)) \\ (-g_1 + g_2)
     \end{bmatrix} \right).
\end{align*}
From equations $\eqref{r1mean}$ and $\eqref{r2mean}$ we also get that $r_i \leq s$ for $i=1,2,$ so we can reduce this to
\begin{align*}
    \vb \leq \frac{1}{\Delta_2} \begin{bmatrix}
        2g_2(1-g_1)s^2 + (2 - (g_1+g_2))(1-c)^2s(1-s)\\
        2g_2(1-g_1)s^2 + (-g_1 + g_2)(1-c)^2s(1-s)
    \end{bmatrix}.
\end{align*}
Plugging in $\Delta_2 = 2(1-g_1)(1-g_2)$, and noting that $g_2 \geq g_1 $, we get
\begin{align*}
    v_1 &\leq \frac{g_2 s^2}{1 - g_2} + \frac{1}{2}\left( \frac{1}{1- g_2} + \frac{1}{1 - g_1}\right)(1-c)^2s(1-s)\\
    &\leq \frac{g_2 s^2}{1 - g_2} + \frac{1}{1 - g_2}(1-c)^2s(1-s),
\end{align*}
and
\begin{align*}
    v_2 &\leq \frac{g_2 s^2}{1-g_2} + \frac{1}{2}\left( \frac{1}{1- g_2} - \frac{1}{1 - g_1} \right)(1-c)^2s(1-s) \\
    &\leq \frac{g_2 s^2}{1-g_2} + \frac{g_2}{2(1-g_1)(1-g_2)}(1-c)^2s(1-s).
\end{align*}
Finally, using $\bbE[1 / (\cD^{(1)})^2 ] \leq 8/(a+b)^2$ and \eqref{bf}, we have $g_2 \leq \min\{c^2,4c^2/(a+b)\}$, and so
\begin{align*}
    v_1 &\leq \frac{4c^2s^2}{(a+b)(1-c^2)} + \frac{1-c}{1+c}s(1-s), \\
    v_2 &\leq \frac{4c^2s^2}{(a+b)(1-c^2)}\left(1 + \frac{(1-c)(1-s)}{2s(1+c)}\right).
\end{align*}

\end{proof}

We are now ready to prove Theorem \ref{maintheorem}.
\begin{proof}[Proof of Theorem \ref{maintheorem}] For any $z > 0$, Chebyshev's inequality gives
\begin{align}
    \notag \bbP(\cR^{(1)} \leq r_1 - z) &= \bbP(\cR^{(1)} - r_1 \leq -z) \\
    \notag &\leq \frac{v_1}{z^2} \\
    &\leq \frac{1}{z^2}\left(\frac{4c^2s^2}{(a+b)(1-c^2)} + \frac{1-c}{1+c}s(1-s) \right) \label{r1conc}.
\end{align}
A similar application of Chebyshev's inequality for any $w > 0$ with $\cR^{(2)}$ gives 
\begin{align}
    \bbP\left(\cR^{(2)} > \frac{s}{2} + w \right) &\leq \bbP(\cR^{(2)} > r_2 + w) \leq \frac{v_2}{w^2} \notag\\
    &\leq \frac{1}{w^2}  \frac{4c^2s^2}{(a+b)(1-c^2)}\left(1 + \frac{(1-c)(1-s)}{2s(1+c)} \right) \label{r2conc},
\end{align}    
where the first inequality follows from equation \eqref{r2bound}.
Choosing $c \in (1/2, 1 - 8b/(a+b)]$ results in $r_1 \geq 3s/4$, so choosing $z = w = s/8$ and plugging into the bounds from equations \eqref{r1conc} and \eqref{r2conc} gives
\begin{align*}
    \bbP\left(\cR^{(1)} < \frac{5s}{8} \right) &\leq \frac{256 c^2}{(a+b)(1-c^2)} + \frac{64(1-c)}{1+c}\cdot \frac{1-s}{s}, \\
        \bbP\left(\cR^{(2)} > \frac{5s}{8} \right) &\leq \frac{256 c^2}{(a+b)(1-c^2)}\left(1 + \frac{(1-c)(1-s)}{2s(1+c)} \right). 
\end{align*}
\end{proof} 

\section{Results from simulations}
We illustrate the algorithm with some simulation experiments. First, we calculated the personalized PageRank scores for a 2-community dSBM with $n=20000$ vertices,  $a=150$, $b=10$, $s = .2$ and $c = .85$. The plot shows a clear separation of the PPR scores of the seeds, the rest of community 1 and the vertices in community 2.
\begin{figure}[ht]
\centering
\includegraphics[scale = 0.25]{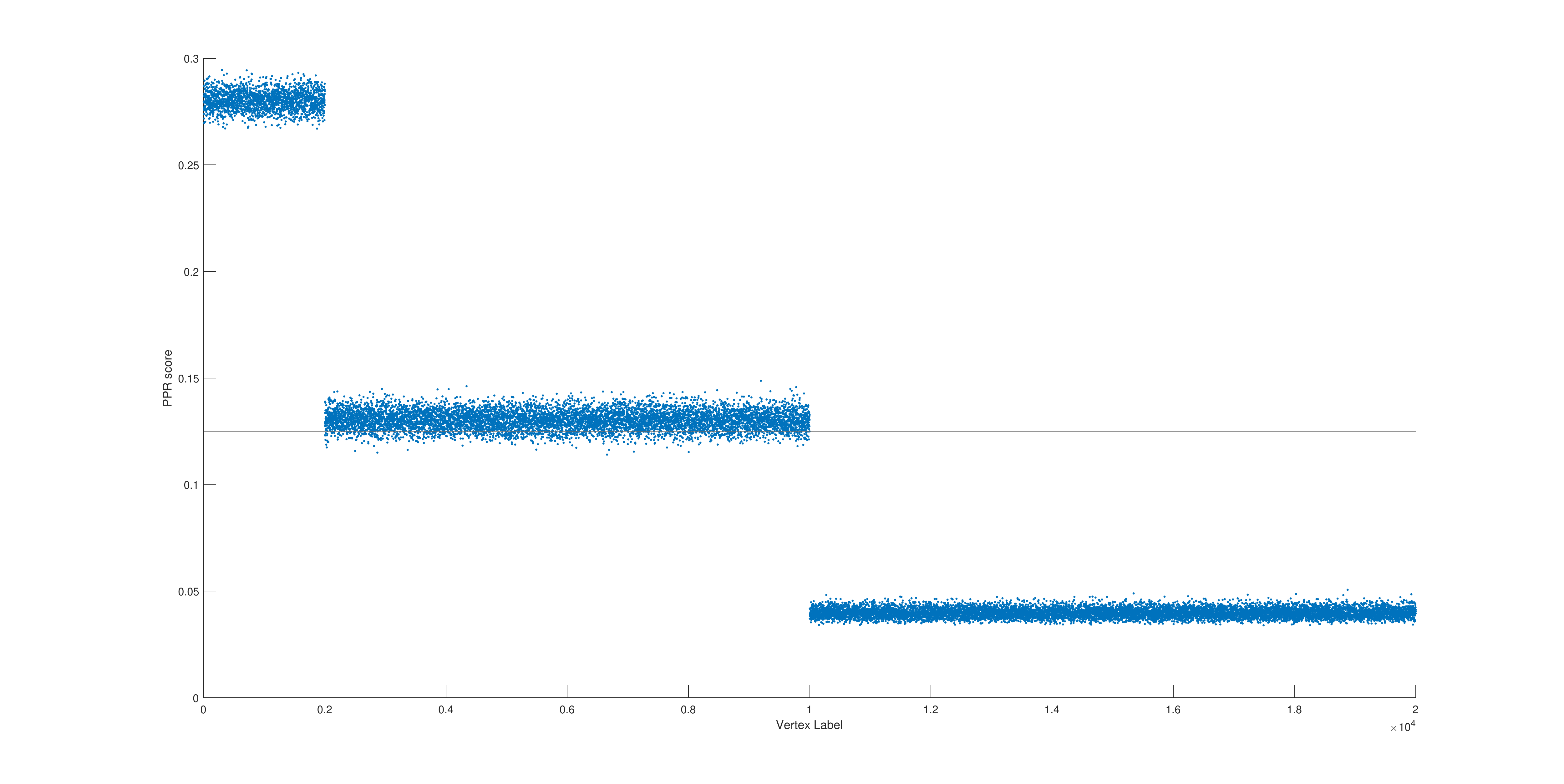}
\caption{A plot of the personalized PageRank scores for a 2-community dSBM with $a=150$, $b=10$, $n=20000$, $s=0.2$, and $c = .85$. The first 2000 vertices are the seeds, vertices 2001-10000 are the rest of community 1, and vertices 10001-20000 are community 2. The horizontal black line is our cutoff level $5s/8$. Proportion of misclassified community 1 vertices is 0.0935.}
\label{fig:prplot}
\end{figure}

We also investigated the role of the damping factor $c$ and the best way to choose it. One natural way of doing so is to find the value of $c$ that maximizes the difference between the mean PPR scores for the two communities. Note that $r_1 - r_2$ is strictly monotone in $c$, but if we let $\hat{r}_1$ to be the mean of the non-seeded members of community 1, we see in Fig. \ref{fig:diffofmeans} that $\hat{r}_1 - r_2$ is strictly convex with a maximum attained at $c=.86$. We have a description for the optimal $c^*$ as follows.

\begin{lemma}
Let $\hat{r}_1$ and $r_2$ be as described above. Then $$c^{*} := {\rm argmax}_c \{\hat{r}_1(c) - r_2(c) \} = \frac{1 - \sqrt{1 - E}}{E},$$ where $$E = \frac{a-b}{a+b}\left(1 - {\rm e}^{-(a+b)/2}\right).$$ 
\end{lemma}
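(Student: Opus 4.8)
The plan is to turn $\hat r_1(c)-r_2(c)$ into an explicit rational function of $c$ using Lemma~\ref{firstmomentbound}, and then do an elementary single–variable optimization.

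\textbf{Step 1 (relate $\hat r_1$ to $r_1$).} In the local weak limit a non‑seeded member of community~$1$ is a root of type~$1$ whose personalization value $\cQ^{(1)}$ equals $0$. Since $\cQ^{(1)}$ is independent of all the other random variables in \eqref{r_eq1}, conditioning on $\{\cQ^{(1)}=0\}$ just deletes the summand $(1-c)\cQ^{(1)}$; taking expectations and comparing with the expectation of \eqref{r_eq1} used in the proof of Lemma~\ref{firstmomentbound} gives
\begin{align*}
\hat r_1 = r_1 - (1-c)s .
\end{align*}

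\textbf{Step 2 (compute $r_1-r_2$).} From \eqref{r1mean}–\eqref{r2mean}, the numerators of $r_1$ and $r_2$ differ by $\big(1-\tfrac{c\lambda a}{a+b}-\tfrac{c\lambda b}{a+b}\big)s(1-c)=(1-c\lambda)s(1-c)$, while factoring the denominator as a difference of squares gives
\begin{align*}
\Delta=\Big(1-\tfrac{c\lambda a}{a+b}\Big)^2-\Big(\tfrac{c\lambda b}{a+b}\Big)^2=(1-c\lambda)\Big(1-\tfrac{c\lambda(a-b)}{a+b}\Big)=(1-c\lambda)(1-cE).
\end{align*}
Hence $r_1-r_2=\dfrac{s(1-c)}{1-cE}$, and combining with Step~1,
\begin{align*}
\hat r_1(c)-r_2(c)=\frac{s(1-c)}{1-cE}-(1-c)s=\frac{sE\,c(1-c)}{1-cE}.
\end{align*}

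\textbf{Step 3 (optimize).} Under the standing assumptions $a>b$, and $\lambda\in(0,1)$, so $E\in(0,1)$ and $1-cE>0$ on $(0,1)$; thus maximizing $\hat r_1-r_2$ is equivalent to maximizing $\phi(c)=c(1-c)/(1-cE)$. Differentiating,
\begin{align*}
\phi'(c)=\frac{(1-2c)(1-cE)+Ec(1-c)}{(1-cE)^2}=\frac{Ec^2-2c+1}{(1-cE)^2},
\end{align*}
so the critical points are the roots $c=\dfrac{1\pm\sqrt{1-E}}{E}$ of $Ec^2-2c+1=0$. Since $E<1$, the root with the plus sign exceeds $1$ and is irrelevant, whereas $c^*=\dfrac{1-\sqrt{1-E}}{E}$ lies in $(0,1)$ (equivalently $1-E<\sqrt{1-E}<1$). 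Because $Ec^2-2c+1$ is an upward parabola vanishing at these two roots, it is positive on $(0,c^*)$ and negative on $(c^*,1)$, so $\phi$ — and hence $\hat r_1-r_2$ — increases then decreases on $(0,1)$, confirming that $c^*=(1-\sqrt{1-E})/E$ is the unique maximizer.

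\textbf{Main obstacle.} There is no real difficulty here beyond the bookkeeping in Steps~1–2, namely recognizing $\hat r_1=r_1-(1-c)s$ and performing the factorization of $\Delta$; once $\hat r_1-r_2$ has the closed form $sEc(1-c)/(1-cE)$ the optimization is routine. The only point needing a short justification is selecting the physically meaningful root, which follows immediately from $E\in(0,1)$.
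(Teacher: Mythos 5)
Your proof is correct, and the overall skeleton (obtain a closed form for $\hat r_1-r_2$ as a rational function of $c$ via Lemma~\ref{firstmomentbound}, then do one-variable calculus) is the same as the paper's. The one genuine difference is how you get $\hat r_1$: the paper splits community~1 into seeds and non-seeds and treats the model as a 3-community dSBM, invoking a 3-type version of Theorem~\ref{PRequations} to read off $\hat r_1$ from a $3\times 3$ linear system, whereas you simply condition on $\{\cQ^{(1)}=0\}$ in \eqref{r_eq1} and use the independence of $\cQ^{(1)}$ from the branching sums to get $\hat r_1=r_1-(1-c)s$ directly from the 2-type system. Your route is lighter (no extra limit theorem needed) and yields exactly the paper's expression for $\hat r_1$, since the paper's formula is precisely $r_1-(1-c)s$. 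Worth noting: your explicit form $\hat r_1(c)-r_2(c)=sEc(1-c)/(1-cE)$ is the correct one; the paper's displayed identity $\hat r_1-r_2=(1-c)s/(1-cE)$ is evidently a typo (that expression equals $r_1-r_2$, which is strictly decreasing in $c$ and would have no interior maximizer), and only your version is consistent with the paper's own formulas for $\hat r_1$, $r_2$ and with the claimed optimizer $c^*=(1-\sqrt{1-E})/E$. Your Step~3 also supplies the sign analysis of $Ec^2-2c+1$ and the exclusion of the root exceeding $1$, details the paper leaves to ``use calculus.''
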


\begin{proof}
To calculate $\hat{r_1}$, we consider the dSBM to have $3$ communities, where we separate the seeds and the rest of the vertices in community 1. Then, Theorem~\ref{PRequations} gives us a system of 3 distributional fixed-point equations. Using those, and calculations similar to the ones we did for Lemma~\ref{firstmomentbound}, we get
\begin{align}
	\hat{r_1} &= (1-c)s \left( \frac{1 - \frac{c \lambda a}{a+b}}{(1 - c\lambda) \left(1 - c \lambda \left(\frac{a-b}{a+b}\right) \right)} - 1 \right)\\ \label{eqhatr1}
	r_2 &= (1-c)s \left( \frac{\frac{c \lambda b}{a+b}}{(1 - c \lambda) \left(1 - c \lambda \frac{a-b}{a+b} \right)}\right). \notag
\end{align}
Now substitute $E = \lambda (a-b)/(a+b)$ to obtain that 
$$\hat{r}_1 - r_2 = \frac{(1-c)s}{1-cE}, $$
and use calculus to compute the optimal $$c^{*} = \frac{1 - \sqrt{1-E}}{E}.$$
\end{proof}

Note that the value $E$ is the second eigenvalue of the matrix on the left hand side of equation $\eqref{matrixform}$. 

\begin{figure}[ht]
\centering
\includegraphics[scale = 0.5]{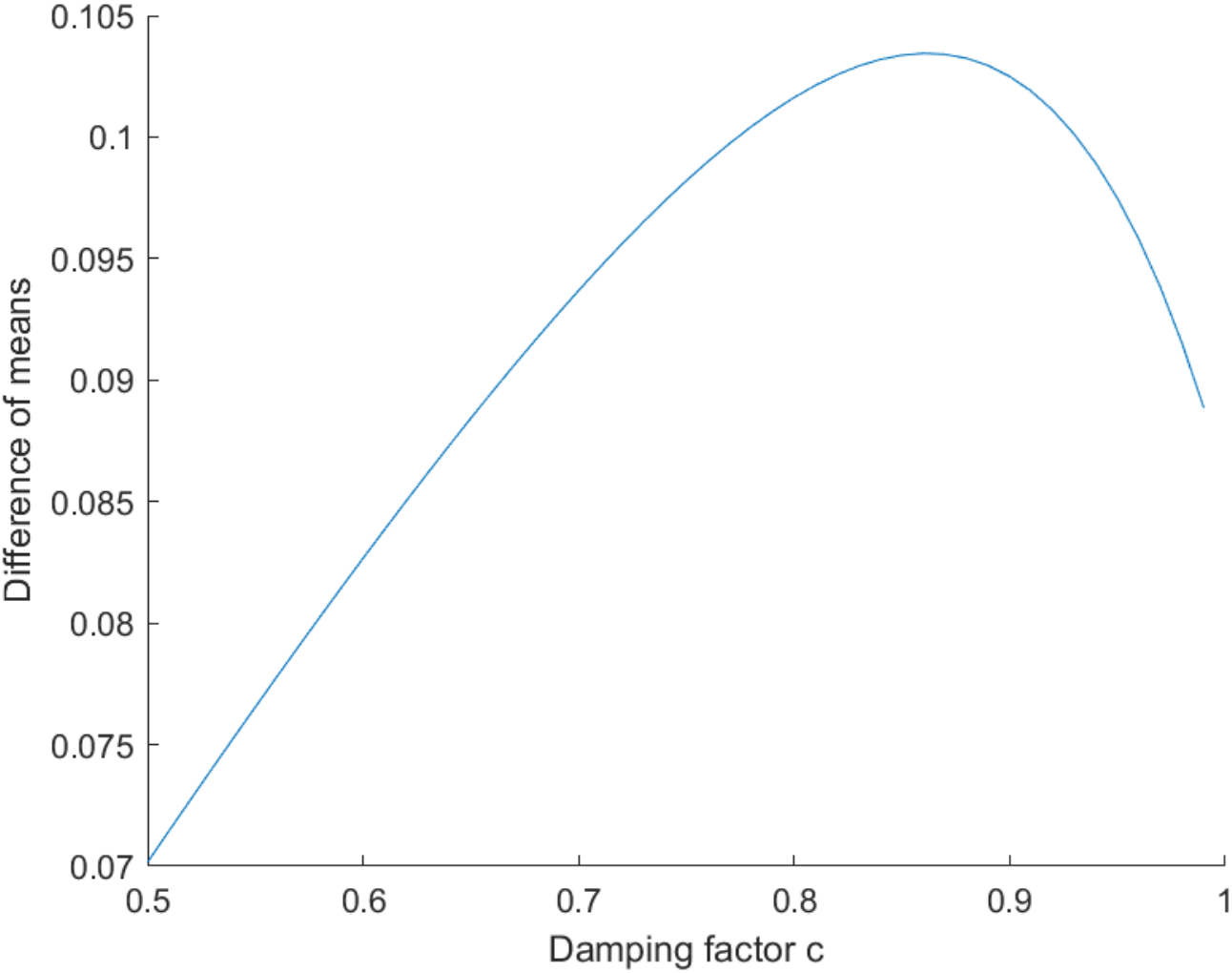}
\caption{Plot of $\hat{r}_1 - r_2$ as c varies from .5 to 1 for a smaller dSBM with $n = 2000$, $a = 100$, $b = 2$ and $s = .15$.}
\label{fig:diffofmeans}
\end{figure}

\section{Remarks and conclusions}
In the sparse regime, we have proposed a cutoff level to identify vertices of community 1 based on their personalized PageRank scores and provided theoretical bounds on the probability of misclassifying a vertex. Our bounds are not tight, and simulations indicate that we might be able to use a lower threshold to further reduce the error (see also Remark \ref{opco}). Another possible threshold option in the case of the symmetric SBM ($\pi_1=\pi_2$) is the median of PageRank scores. We also believe that the proposed method should work for asymmetric dSBMs with $\pi_1 \neq \pi_2$, but the expressions for the mean and variance of PageRank become too complicated to compute clean bounds. Possible future work could 
include trying to show that the $\pi_1$-th quantile of the limiting PageRank distribution is a good threshold in the case $\pi_1 \neq \pi_2$, or trying to find a threshold independent of $\pi$ so that we can recover communities even when we do not have information about their sizes.
Another interesting direction would be to investigate whether the inference can be strengthened if the seed set contains members from both communities and/or the connectivity structure of the subgraph spanned by the seeds is fully or partially known.

%
%
%

\begin{thebibliography}{8}

\bibitem{ref_meanfield}
Avrachenkov, K., Kadavankandy, A., \& Litvak, N. (2018). Mean field analysis of personalized pagerank with implications for local graph clustering. \textit{Journal of statistical physics, 173(3)}, 895-916.

\bibitem{ref_weakconvpr}
Garavaglia, A., van der Hofstad, R., \& Litvak, N. (2020). Local weak convergence for PageRank. \textit{The Annals of Applied Probability, 30(1)}, 40-79.

\bibitem{ref_dcsbm}
Gulikers, L., Lelarge, M., \& Massoulié, L. (2018). An impossibility result for reconstruction in the degree-corrected stochastic block model. \textit{The Annals of Applied Probability, 28(5)}, 3002-3027.

\bibitem{ref_heatsbm}
Kloumann, I. M., Ugander, J., \& Kleinberg, J. (2017). Block models and personalized PageRank. \textit{Proceedings of the National Academy of Sciences, 114(1)}, 33-38.


\bibitem{ref_prinhomogeneous}
Lee, J., \& Olvera-Cravioto, M. (2020). PageRank on inhomogeneous random digraphs. \textit{Stochastic Processes and their Applications, 130(4)}, 2312-2348.

\bibitem{ref_mns_dense}
Mossel, E., Neeman, J., \& Sly, A. (2014). Consistency thresholds for binary symmetric block models. \textit{arXiv preprint arXiv:1407.1591}, 3(5).

\bibitem{ref_mns_sparse}
Mossel, E., Neeman, J., \& Sly, A. (2015). Reconstruction and estimation in the planted partition model. \textit{Probability Theory and Related Fields}, 162(3), 431-461.

\bibitem{ref_massoulie_sparse}
Massoulié, L. (2014, May). Community detection thresholds and the weak Ramanujan property. In \textit{Proceedings of the forty-sixth annual ACM symposium on Theory of computing} (pp. 694-703).

\bibitem{ref_partialrecovery}
Chin, P., Rao, A., \& Vu, V. (2015, June). Stochastic block model and community detection in sparse graphs: A spectral algorithm with optimal rate of recovery. In \textit{Conference on Learning Theory} (pp. 391-423). PMLR.

\bibitem{ref_localclustering}
Spielman, D. A., \& Teng, S. H. (2013). A local clustering algorithm for massive graphs and its application to nearly linear time graph partitioning. \textit{SIAM Journal on computing}, 42(1), 1-26.


\bibitem{ref_prnibble}
Andersen, R., Chung, F., \& Lang, K. (2007, December). Local partitioning for directed graphs using pagerank. In \textit{International Workshop on Algorithms and Models for the Web-Graph} (pp. 166-178). Springer, Berlin, Heidelberg.

\bibitem{ref_censor}
Dhara, S., Gaudio, J., Mossel, E., \& Sandon, C. (2022). Spectral recovery of binary censored block models*. In \textit{Proceedings of the 2022 Annual ACM-SIAM Symposium on Discrete Algorithms (SODA)} (pp. 3389-3416). Society for Industrial and Applied Mathematics.

\bibitem{ref_distributedpr}
Das Sarma, A., Molla, A. R., Pandurangan, G., \& Upfal, E. (2013, January). Fast distributed pagerank computation. In \textit{International Conference on Distributed Computing and Networking} (pp. 11-26). Springer, Berlin, Heidelberg.

\bibitem{ref_olv22corrPR}
Olvera–Cravioto, M. (2021). PageRank’s behavior under degree correlations. \textit{The Annals of Applied Probability}, 31(3), 1403-1442.







\end{thebibliography}
%

\end{document}